\newcommand{\zz}{\mathbb{Z}}
\newcommand{\nn}{\mathbb{N}}
\newcommand{\gf}{\mathbb{F}}
\newcommand{\ceil}[1]{\left\lceil#1\right\rceil}
\newcommand{\ord}{\mathrm{ord}}
\newcommand{\N}{\mathrm{N}}
\newtheorem{prop}{Proposition}
\newtheorem{lma}{Lemma}
\newtheorem{thm}{Theorem}
\newtheorem{fact}{Fact}
\begin{document}
\title{Finite field elements of high order arising from modular curves\\
(Appeared in \textit{Designs, Codes, and Cryptography})}

\bibliographystyle{plain}
\author{Jessica F. Burkhart}
\address{
Jessica F. Burkhart\\
Department of Mathematical Sciences\\
Clemson University\\
Box 340975 Clemson, SC 29634-0975
}
\email{burkhar@clemson.edu}

\author{Neil J. Calkin}
\address{
Neil J. Calkin\\
Department of Mathematical Sciences\\
Clemson University\\
Box 340975 Clemson, SC 29634-0975
}
\email{calkin@clemson.edu}

\author{Shuhong Gao}
\address{
Shuhong Gao\\
Department of Mathematical Sciences\\
Clemson University\\
Box 340975 Clemson, SC 29634-0975}
\email{sgao@clemson.edu}

\author{Justine C. Hyde-Volpe}
\address{
Justine C. Hyde-Volpe\\
Department of Mathematical Sciences\\
Clemson University\\
Box 340975 Clemson, SC 29634-0975
}
\email{jchasma@clemson.edu}

\author{Kevin James}
\address{
Kevin James\\
Department of Mathematical Sciences\\
Clemson University\\
Box 340975 Clemson, SC 29634-0975
}
\email{kevja@clemson.edu}

\author{Hiren Maharaj}
\address{
Hiren Maharaj\\
Department of Mathematical Sciences\\
Clemson University\\
Box 340975 Clemson, SC 29634-0975}
\email{hmahara@clemson.edu}

\author{Shelly Manber}
\address{
Shelly Manber\\
Department of Mathematics\\
Massachusetts Institute of Technology\\
Cambridge, MA 02139}
\email{shellym@mit.edu}

\author{Jared Ruiz}
\address{
Jared Ruiz\\
Department of Mathematics and Statistics\\
Youngstown State University\\
One University Plaza\\
Youngstown, OH 44555}
\email{jmruiz@student.ysu.edu}

\author{Ethan Smith}
\address{
Ethan Smith\\
Department of Mathematical Sciences\\
Clemson University\\
Box 340975 Clemson, SC 29634-0975
}
\email{ethans@math.clemson.edu}
\thanks{Burkhart, Calkin, Hyde-Volpe, James, Manber,  Ruiz,  and  Smith
were partially supported by the NSF grant DMS 0552799, 
and Gao was partially supported by NSF grant DMS 0302549.}

\begin{abstract}
In this paper, we recursively construct explicit elements of provably high order
in finite fields.  We do this using the recursive formulas developed by Elkies 
to describe explicit modular towers. 
In particular, we give two 
explicit constructions based on two examples of 
his formulas and demonstrate that the 
resulting elements have high order.  
Between the two constructions, we are able to generate high order 
elements in every characteristic.
Despite the use of the modular recursions of Elkies, our methods are quite 
elementary and require no knowledge of modular curves.
We compare our results to a recent result of Voloch.  
In order to do this, we state and prove a slightly more refined version of a 
special case of his result.
\end{abstract}

\maketitle

\section{Introduction}
Finding large order elements of finite fields has long been a problem of 
interest, particularly to  cryptographers.  Given a finite field $\gf_q$, 
Gao~\cite{gao:1999} gives an algorithm for constructing elements of $\gf_{q^n}$
of order greater than 
$$n^{\frac{\log_qn}{4\log_q(2\log_qn)}-\frac{1}{2}}.$$
The advantage of the algorithm is that it makes no restriction on $q$ and it 
allows one to produce a provably
high order element in any desired extension of $\gf_q$ provided that
one can find a polynomial in $\gf_q[x]$ with certain desirable properties.
Gao conjectures that for any $n>1$, there exists a polynomial of degree 
at most $2\log_qn$ satisfying the conditions of his theorem.  
Conflitti has made some improvement to Gao's construction in~\cite{Con:2001}.
However, the aforementioned conjecture remains unproven.  Another 
result concerning the $q$ ``shifts" of an element of a general 
extension of $\gf_q$ appears in~\cite[Corollary 4.4]{vzGS:2001}.

For special finite fields, it is possible to construct elements which 
can be proved to have much higher orders.  
For example, in Theorems~\ref{2tower} and~\ref{3tower} of this paper we 
construct elements of higher order in extensions 
of $\gf_q$ of the form $\gf_{q^{2^n}}$ and $\gf_{q^{3^n}}$.
See~\cite{GV:1995, GvGP:1998, vGS:1995} on orders of Gauss 
periods and~\cite{cheng:2005, cheng:2007} on Kummer extensions.
It has been pointed out to us that the method of~\cite{cheng:2005, cheng:2007} 
is able to produce higher order elements in the same extensions as our method.
However, our method of construction is new, and we hope that it will 
prove to be a fruitful technique.

In~\cite{voloch:2007}, Voloch shows that under certain conditions, one of the 
coordinates of a point on a plane curve must have high order.
The bounds we obtain through our methods have order of
magnitude similar to those predicted in the main theorem of~\cite{voloch:2007}.
In a special case however, Voloch is able to achieve bounds which are much 
better. 
See section 5 of~\cite{voloch:2007}.  
Unfortunately, Voloch does not fully state this theorem and only alludes to 
how one may adapt the proof of his main theorem for this special case.
The bounds given in~\cite{voloch:2007} are not as explicit as the ones given
in this paper.  Moreover, Voloch gives no explicit examples of his theorems.  
In Section~\ref{voloch_comparison} of this paper, 
we apply Voloch's technique to obtain a 
more explicit version of the special case of his main theorem.
We then construct a sequence of elements for which his bounds apply and compare
with our methods.

In this paper, we consider elements in finite field towers recursively generated
according to the equations for explicit modular towers~\cite{elk:1998}. 
We give two explicit 
constructions: one for odd characteristic and one for characteristic not equal 
to $3$.  In the first case, we explicitly construct elements 
of $\gf_{q^{2^n}}$
whose orders are bounded below by
$2^{\frac{1}{2}n^2+\frac{3}{2}n+\ord_2(q-1)-1}$.
In the second, we obtain elements of $\gf_{q^{3^n}}$ whose orders are 
bounded below by
$3^{\frac{1}{2}n^2+\frac{3}{2}n+\ord_3(q-1)}$.
Throughout we use the convention that exponentiation is 
right-associative, i.e., $a^{b^c}:=a^{(b^c)}$.

\section{Constructions Arising from Modular Towers}

In~\cite{elk:1998}, Elkies gives a recursive formula for the defining equations 
of the modular curve $X_0(\ell^n)$ by identifying $X_0(\ell^n)$ 
within the product $\left(X_0(\ell^2)\right)^{n-1}$ 
for $n>1$.  For several cases, he even writes explicit equations.
For example, in the case $\ell=2$, the recursion is governed by the rule
\begin{equation}\label{x02}
(x^2_j-1)\left(\left(\frac{x_{j+1}+3}{x_{j+1}-1}\right)^2-1\right)=1
\mbox{ for } j=1,2,\dots, n-2.
\end{equation}
Elkies also notices that under a suitable change of variables and a reduction 
modulo 3, the equation becomes 
$$y_{j+1}^2=y_j-y_j^2,$$
which was used by Garcia and Stichtenoth~\cite{GS:1996} to recursively 
construct 
an asymptotically optimal function field tower.  In fact, Elkies notes that 
many recursively constructed optimal towers may now be seen as arising from 
these modular curve constructions and speculates that perhaps all such 
towers are modular in this sense.

In this paper, we use Elkies' formulas to generate high order elements in 
towers of finite fields.  For example, 
the following construction will yield high order elements in odd characteristic.
The equation (\ref{x02}) may be 
manipulated to the form $f(X,Y)=0$, where
\begin{equation}\label{2towerpoly}
f(X,Y):=Y^2+(6-8X^2)Y+(9-8X^2), 
\end{equation}
and we have made the substitution $X=x_j$ and $Y=x_{j+1}$.  
Now, choose 
$q=p^m$ to be an odd prime power such that $\gf_q$ contains the fourth roots of 
unity (i.e. $q\equiv 1\pmod 4$). Choose $\alpha_0\in\gf_q$ such that 
$\alpha_0^2-1$ is not a square in $\gf_q$.  In Lemma~\ref{2start}
(see Section~\ref{numbthysec}), 
we will show that such an $\alpha_0$ always exists.  Finally, 
define $\alpha_n$ by $f(\alpha_{n-1},\alpha_n)=0$ for 
$n\ge 1$. This construction yields the following result; where, as usual, for 
a prime $\ell$, $\ord_\ell(a)$ denotes the highest power of $\ell$ dividing $a$.
\begin{thm}\label{2tower}
Let $\delta_n:=\alpha_n^2-1$.
Then $\delta_n$ has degree $2^n$ over $\gf_q$, and 
the order of $\delta_n$ in $\gf_{q^{2^n}}$ is greater than
$2^{\frac{1}{2}n^2+\frac{3}{2}n+\ord_2(q-1)}$ unless $q\equiv 2\pmod 3$ and 
$\alpha_0=\pm\left(\frac{p-1}{2}\right)$, 
in which case the order of $\delta_n$ is greater than
$2^{\frac{1}{2}n^2+\frac{3}{2}n+\ord_2(q-1)-1}$.
\end{thm}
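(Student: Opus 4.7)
The plan is to extract everything from one algebraic identity derived via Vieta, and then run inductive Frobenius/character and order arguments on top. First I would let $\alpha_n'$ denote the companion root of $f(\alpha_{n-1},Y)=0$; Vieta's formulas give $\alpha_n+\alpha_n'=8\alpha_{n-1}^2-6$ and $\alpha_n\alpha_n'=9-8\alpha_{n-1}^2$. Expanding
\[
(\alpha_n^2-1)(\alpha_n'^2-1) = \bigl((\alpha_n-1)(\alpha_n'-1)\bigr)\bigl((\alpha_n+1)(\alpha_n'+1)\bigr)
\]
in these symmetric functions collapses to $(-16\delta_{n-1})\cdot 4 = -64\delta_{n-1}$. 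Since $\alpha_n' = \alpha_n^{q^{2^{n-1}}}$ is the Frobenius conjugate of $\alpha_n$ over $\gf_{q^{2^{n-1}}}$, this yields the central identity
\[
\delta_n^{\,1+q^{2^{n-1}}} = -64\,\delta_{n-1}. \qquad(\star)
\]

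Next I would prove by induction on $n$ that $\delta_n$ is a non-square in $\gf_{q^{2^n}}^*$; the base case is the hypothesis on $\alpha_0$. Raising $(\star)$ to the $(q^{2^{n-1}}-1)/2$ power equates the quadratic character of $\delta_n$ in $\gf_{q^{2^n}}^*$ with the product of the quadratic characters of $-64$ and $\delta_{n-1}$ in $\gf_{q^{2^{n-1}}}^*$. Since $q\equiv 1\pmod 4$ makes $-1$ a square in every $\gf_{q^{2^j}}$ and $64=8^2$ is a square already, the $-64$-factor is $+1$; by induction the $\delta_{n-1}$-factor is $-1$. Non-squareness at each stage prevents the tower $\gf_q(\alpha_0)\subset\gf_q(\alpha_1)\subset\cdots$ from collapsing, so $[\gf_q(\delta_n):\gf_q]=2^n$, and by lifting-the-exponent it also forces $v_2(\text{ord}(\delta_n)) = v_2(q^{2^n}-1) = n+\ord_2(q-1)$.

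For the odd part I would iterate $(\star)$ to obtain, for every $k=0,\dots,n$,
\[
\delta_n^{M_{n-k}} = (-64)^{2^{n-k}-1}\,\delta_k, \qquad M_{n-k} := \frac{q^{2^n}-1}{q^{2^k}-1} = \prod_{j=k}^{n-1}(q^{2^j}+1),
\]
exhibiting each $\delta_k$ as a power of $\delta_n$ up to a scalar in $\gf_q^*$. Since $\delta_k$ has degree exactly $2^k$, Frobenius must act with order $2^k$ on $\delta_n^{M_{n-k}}$, giving the divisibility constraint $d\nmid(q^{2^n}-1)/(q^{2^{k-1}}+1)$ at each level $k=1,\dots,n$, where $d=\text{ord}(\delta_n)$. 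Since the odd parts of $q-1,\ q+1,\ q^2+1,\ldots,q^{2^{n-1}}+1$ are pairwise coprime, each level contributes an independent piece to the odd part of $d$; coupling these pieces with the inductive lower bound on $\text{ord}(\delta_k)$ propagated through $(\star)$ yields the factor $2^{(n^2+n)/2}$, which together with the $2$-part gives the asserted bound $2^{(n^2+3n)/2+\ord_2(q-1)}$. In the exceptional case $q\equiv 2\pmod 3$ and $\alpha_0=\pm(p-1)/2$, the integer representative of $\delta_0=(p-3)(p+1)/4$ is divisible by $3$, which absorbs one factor the argument would otherwise contribute and weakens the exponent by $1$. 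The main obstacle is this last step: the bare Galois-orbit condition at each level furnishes only one new odd prime (which would yield merely a linear exponent), so the quadratic exponent must come from weighing the cyclotomic blocks $q^{2^{k-1}}+1$ against the inductive order bound rather than from counting prime factors alone.
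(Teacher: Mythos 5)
Your identity $(\star)$, the non-squareness induction, and the computation of the $2$-part of the order all match the paper's argument (the paper phrases $(\star)$ as the norm relation $\mathrm{N}_{n,1}(\delta_n)=-64\delta_{n-1}$), and that portion of your plan is sound. The problems are in the odd part, which is where the quadratic exponent actually comes from. First, the constraint you extract from ``$\delta_k$ has degree exactly $2^k$,'' namely $d\nmid(q^{2^n}-1)/(q^{2^{k-1}}+1)$, is vacuous: you have already shown $v_2(d)=v_2(q^{2^n}-1)$, while $v_2\bigl((q^{2^n}-1)/(q^{2^{k-1}}+1)\bigr)=v_2(q^{2^n}-1)-1$ because $q\equiv 1\pmod 4$ forces $v_2(q^{2^{k-1}}+1)=1$. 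So the non-divisibility is explained entirely by the prime $2$ and yields no odd prime factor. What is needed is the stronger statement $d\nmid(q^{2^n}-1)\big/\bigl((q^{2^{k-1}}+1)/2\bigr)$, equivalently that $(\delta_k^2)^{q^{2^{k-1}}-1}\neq 1$, i.e.\ $\delta_k^2\notin\gf_{q^{2^{k-1}}}$. This does not follow from $\delta_k\notin\gf_{q^{2^{k-1}}}$; the paper proves it by expanding $\delta_k^2$ via the relation $F(X,Y)=Y^2-(48X+64X^2)Y-64X$ and showing the coefficient $48\delta_{k-1}+64\delta_{k-1}^2$ vanishes only when $\delta_{k-1}=0$ or $\delta_{k-1}=-3/4$. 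The latter is a legitimate (non-square) value of $\delta_0$ precisely when $-3$ is a non-square, i.e.\ $q\equiv 2\pmod 3$, with $\alpha_0=\pm(p-1)/2$ --- that is the true origin of the exceptional case and of the loss of exactly one factor of $2$. Your explanation via integer divisibility of $(p-3)(p+1)/4$ by $3$ is not the mechanism and has no bearing on multiplicative orders in $\gf_q$.

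Second, you explicitly concede that you do not see how counting one odd prime per level can produce a quadratic exponent, and you propose instead to ``weigh the cyclotomic blocks against the inductive order bound,'' which is not an argument. In fact counting primes does suffice, because of a size lemma you are missing: if $p$ is a prime dividing $(q^{2^{N}}+1)/2$, then $q^{2^N}\equiv -1\pmod p$, so $q$ has order exactly $2^{N+1}$ modulo $p$, whence $2^{N+1}\mid p-1$ and $p>2^{N+1}$. Combined with the pairwise coprimality of the numbers $(q^{2^{n-j}}+1)/2$ (the paper's Lemma~\ref{gcdcomp}), the $n$ levels contribute $n$ distinct odd primes of sizes exceeding $2^1,2^2,\dots,2^n$, whose product exceeds $2^{n(n+1)/2}$; multiplying by the $2$-part $2^{n+\ord_2(q-1)}$ gives exactly the stated bound. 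Without this lemma (the paper's Lemma~\ref{primebound}) your argument bottoms out at a linear exponent, as you yourself observe, so the proposal as written does not prove the theorem.
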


To accommodate even characteristic, we have also considered Elkies' formula for 
$X_0(3^n)$.  We will prefer to work with the equation in the 
polynomial form $g(X,Y)=0$, where
\begin{equation}\label{3towerpoly}
g(X,Y):=Y^3+(6-9X^3)Y^2+(12-9X^3)Y+(8-9X^3).
\end{equation}
For this construction, choose $q$ to be a prime power congruent to $1$ modulo 
$3$ but not equal to $4$.
The condition $q\equiv 1\pmod 3$ assures the presence of the 
third roots of unity in $\gf_q$.
Choose $\beta_0\in\gf_q$ such that $\beta_0^3-1$ is not a 
cube in $\gf_q$.  In Lemma~\ref{3start}
(see Section~\ref{numbthysec}), we show that such a $\beta_0$ always 
exists except when $q=4$.
Finally, define $\beta_n$ by $g(\beta_{n-1},\beta_n)=0$ for $n\ge 1$.
For this construction, we have the following result.
\begin{thm}\label{3tower}
Let $\gamma_n:=\beta_n^3-1$.
Then $\gamma_n$ has degree $3^n$ over $\gf_q$, 
and the order of $\gamma_n$ in $\gf_{q^{3^n}}$ is greater than
$3^{\frac{1}{2}n^2+\frac{3}{2}n+\ord_3(q-1)}$.
\end{thm}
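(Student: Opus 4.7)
I split the argument into a degree part and an order part, both driven by a pair of algebraic identities extracted from $g(\beta_{n-1},\beta_n)=0$.

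Rewriting
\[
g(X,Y) = (Y+2)^3 - 9X^3(Y^2+Y+1) = (Y-1)^3 - 9(X^3-1)(Y^2+Y+1)
\]
yields, at $(X,Y)=(\beta_{n-1},\beta_n)$, the twin identities $(\beta_n+2)^3=9\beta_{n-1}^3(\beta_n^2+\beta_n+1)$ and $(\beta_n-1)^3=9\gamma_{n-1}(\beta_n^2+\beta_n+1)$. Writing $K_n:=\gf_q(\beta_n)$, their ratio gives the \emph{cube-root identity}
\[
\gamma_{n-1}=\eta_{n-1}^{\,3}, \qquad \eta_{n-1}:=\frac{\beta_{n-1}(\beta_n-1)}{\beta_n+2}\in K_n,
\]
while multiplying the second identity by $\beta_n-1$ (and using $\gamma_n=(\beta_n-1)(\beta_n^2+\beta_n+1)$) gives the \emph{quartic identity} $(\beta_n-1)^4=9\gamma_{n-1}\gamma_n$. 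Evaluating $g(\beta_{n-1},1)=-27\gamma_{n-1}$ and $g(\beta_{n-1},-2)=-27\beta_{n-1}^3$ gives $\N_{K_n/K_{n-1}}(\beta_n-1)=27\gamma_{n-1}$, $\N_{K_n/K_{n-1}}(\beta_n+2)=27\beta_{n-1}^3$, and hence $\N_{K_n/K_{n-1}}(\gamma_n)=3^6\gamma_{n-1}$.

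For the degree, I induct on $n$, proving simultaneously that $[K_n:\gf_q]=3^n$ and that $\gamma_n$ is not a cube in $K_n$. The base is hypothesis. Were $\gamma_{n+1}=w^3$ in $K_{n+1}$, then taking the norm from $K_{n+1}$ to $K_n$ gives $3^6\gamma_n=\N(w)^3$, and $3^6=9^3$ turns this into $\gamma_n=(\N(w)/9)^3$, a cube in $K_n$, contradicting the hypothesis. Together with the cube-root identity this recognizes $K_{n+1}=K_n(\gamma_n^{1/3})$ as a genuine Kummer cubic extension. To conclude $\gf_q(\gamma_n)=K_n$ I show $\gamma_n\notin K_{n-1}$: otherwise $\beta_n^3\in K_{n-1}$, making $\beta_n$ a root of both $Y^3-\beta_n^3$ and $g(\beta_{n-1},Y)$ over $K_{n-1}$, so these monic cubics would coincide --- forcing $6-9\beta_{n-1}^3$ and $12-9\beta_{n-1}^3$ to vanish simultaneously, which is impossible.

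For the order, setting $a_n:=\tfrac12 n^2+\tfrac32 n+\ord_3(q-1)$ so $a_n-a_{n-1}=n+1$, the induction reduces to establishing $\ord(\gamma_n)\geq 3^{\,n+1}\ord(\gamma_{n-1})$, with base $\ord(\gamma_0)>3^{\ord_3(q-1)}$ furnished by Lemma~\ref{3start} (and the exclusion $q\neq 4$). The workhorse is iterating the norm: $\gamma_n^m=1$ forces $\gamma_{n-1}^m=3^{-6m}$, and the recursion $c_{k+1}=3c_k+6$ with $c_0=0$, closed-formed as $c_k=3^{k+1}-3$, extends this to $\gamma_{n-k}^m=3^{-c_k m}$ for every $k\leq n$. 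Layered on top, the Kummer-theoretic input from the degree part --- that $\gamma_k$ is not a cube in $K_k$, so by lifting-the-exponent the $3$-part of $\ord(\gamma_k)$ is exactly $3^{\,\ord_3(q-1)+k}$ --- together with the class equation $[\beta_k-1]=[\gamma_k]$ in $K_k^*/K_k^{*3}$ (read off the quartic identity, using that $9$ is automatically a cube in $K_k$ for $k\geq 1$), is used to extract the extra factor of $3$ per level.

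The main obstacle is precisely combining these two sources into the full factor $3^{n+1}$ per step. The norm tower alone bounds $\ord(\gamma_n)$ only by $\ord_{\gf_q^*}(\gamma_0\cdot 3^{\,3^{n+1}-3})\leq q-1$, which is useless once $n$ is moderately large; the Kummer input alone only pins the $3$-part of the order to $3^{\,\ord_3(q-1)+n}$. Their combination must exploit the non-degenerate coupling, at each level $k$, between the Kummer generator $\eta_{k-1}$ of $K_k/K_{k-1}$ and $\gamma_k$ through the quartic identity, so as to force any $m$ with $\gamma_n^m=1$ to pick up an additional factor of $3$ at each of the $n$ intermediate norms. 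Keeping this compounded $3$-adic accounting correct through all $n$ levels is the technical heart of the argument, and is also where I would expect edge-case phenomena (analogous to the ``unless'' clause in Theorem~\ref{2tower}) to require separate treatment, although the statement of Theorem~\ref{3tower} asserts none.
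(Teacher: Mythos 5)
Your degree argument is essentially correct and takes a genuinely different route from the paper's: you extract a Kummer generator $\eta_{n-1}$ with $\eta_{n-1}^3=\gamma_{n-1}$ from the two factorizations of $g$, identify $K_n=K_{n-1}(\gamma_{n-1}^{1/3})$, and propagate ``$\gamma_n$ is not a cube'' by taking the norm of a hypothetical cube root; the paper instead observes (via the cubic formula) that $g(\beta_{n-1},Y)$ is irreducible iff $\gamma_{n-1}$ is a non-cube and verifies $\gamma_n^{(q^{3^n}-1)/3}=\omega$ using $\mathrm{N}_{n,1}(\gamma_n)=(\mathrm{cube})\cdot\gamma_{n-1}$. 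These are equivalent in content, and your check that $\gamma_n\notin K_{n-1}$ by comparing $Y^3-\beta_n^3$ with the minimal polynomial is the paper's basis computation in disguise. (You should still dispatch the residual case where both $6-9\beta_{n-1}^3$ and $12-9\beta_{n-1}^3$ vanish, i.e.\ characteristic $2$ with $\beta_{n-1}=0$, whence $\gamma_{n-1}=1$ is a cube; the paper does this explicitly.)

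The order half has a genuine gap, which you acknowledge yourself: the ``technical heart'' of extracting a factor $3^{n+1}$ per level is never carried out. Worse, the reduction you propose --- a recursive inequality $\ord(\gamma_n)\ge 3^{n+1}\ord(\gamma_{n-1})$ --- is not the right shape of argument, and I do not see how to establish it from the identities you list: the norm relation only gives $\gamma_n^{M}=3^6\gamma_{n-1}$ with $M=(q^{3^{n-1}})^2+q^{3^{n-1}}+1$, which controls $\ord(\gamma_{n-1})$ in terms of $\ord(\gamma_n)/\gcd(\ord(\gamma_n),M)$ and yields nothing multiplicative of the required strength. The paper's mechanism is global rather than recursive: for each $1\le j\le n$, it shows via $\mathrm{N}_{n,j-1}(\gamma_n)=(\mathrm{cube})\cdot\gamma_{n-j+1}$ and $\gamma_{n-j+1}^3\notin\gf_{q^{3^{n-j}}}$ that $\gamma_n$ raised to the power $(q^{3^n}-1)\big/\bigl(((q^{3^{n-j}})^2+q^{3^{n-j}}+1)/3\bigr)$ is nontrivial, so by Fact~\ref{gpfact} the order shares a factor with $((q^{3^{n-j}})^2+q^{3^{n-j}}+1)/3$. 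Lemma~\ref{gcdcomp} makes these $n$ factors pairwise coprime and prime to $3$, and Lemma~\ref{primebound} --- any prime $p$ dividing such a factor satisfies $p\equiv 1\pmod{3^{n-j+1}}$ --- bounds each below by $3^{n-j+1}$; multiplying against the $3$-part $3^{n+\ord_3(q-1)}$ gives the theorem. Your proposal contains none of this cyclotomic-factor arithmetic, which is where all the growth in the exponent actually comes from, so the order bound is unproved as written. Your worry about an ``unless'' clause is also unfounded: the only degenerate case forces characteristic $5$ and $\gamma_0=2$, which is a cube there and hence excluded by the choice of $\beta_0$.
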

There are two interesting things about the above constructions.  
The first is
that, computationally, the elements $\delta_n$ and $\gamma_n$ appear to have 
much higher order than our bounds suggest.  See Section~\ref{eg} for examples.
The second interesting thing
is that, as with the case of 
the optimal function field tower constructions of Garcia and Stichtenoth 
\cite{GS:1995,GS:1996}
arising from these modular curve recipes, our proofs 
do not at all exploit this modularity.  
Perhaps the key to achieving better bounds lies in this relationship.

The paper is organized as follows.  In Section~\ref{numbthysec}, 
we will state and prove some 
elementary number theory facts that will be of use to us.  
In Section~\ref{quadsect}, we 
consider the first construction; and in Section~\ref{cubicsect}, 
we consider the second.  Finally, in Section~\ref{eg}, we give a few examples 
of each of the main theorems.

\section{Number Theoretic Facts}\label{numbthysec}

Recall the following well known fact for detecting perfect $n$-th powers in 
finite fields.  See~\cite[p. 81]{IR:1990} for example.
\begin{fact}\label{pwrdetect}
If $q\equiv 1\pmod n$, then
$x\in\gf_q^*$ is a perfect $n$-th power if and only if 
$x^{(q-1)/n}=1$.
\end{fact}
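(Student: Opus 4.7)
The plan is to exploit the fact that $\gf_q^*$ is a cyclic group of order $q-1$. Fix a generator $g$ of $\gf_q^*$, so that every $x\in\gf_q^*$ can be written uniquely as $x=g^k$ with $0\le k<q-1$. The assumption $q\equiv 1\pmod n$ (i.e.\ $n\mid q-1$) ensures that $(q-1)/n$ is an integer, which is what makes the criterion $x^{(q-1)/n}=1$ well-defined and is exactly the hypothesis that makes the subgroup of $n$-th powers have index $n$ in $\gf_q^*$.

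Next I would translate both conditions into statements about the exponent $k$. On the one hand, $x=g^k$ is a perfect $n$-th power in $\gf_q^*$ iff there exists $y=g^j\in\gf_q^*$ with $y^n=x$, equivalently $nj\equiv k\pmod{q-1}$ has a solution, which (since $n\mid q-1$) is equivalent to $n\mid k$. On the other hand, $x^{(q-1)/n}=g^{k(q-1)/n}$ equals $1$ iff $(q-1)\mid k(q-1)/n$, and this divisibility is again equivalent to $n\mid k$. Comparing the two characterizations gives the claim.

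I would not expect any real obstacle here: the only subtlety is making sure the equivalence $nj\equiv k\pmod{q-1}\iff n\mid k$ uses the hypothesis $n\mid q-1$ in the correct direction, which follows from the standard fact that $\gcd(n,q-1)=n$ under this hypothesis. Alternatively, one could avoid generators entirely by observing that the map $x\mapsto x^n$ on $\gf_q^*$ has kernel of size $\gcd(n,q-1)=n$, hence image of size $(q-1)/n$, which must then coincide with the kernel of $x\mapsto x^{(q-1)/n}$ (a polynomial equation of degree $(q-1)/n$ having at most that many roots in a field); since one subgroup is contained in the other and they have the same order, they are equal. Either route yields the fact in a few lines.
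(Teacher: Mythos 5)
Your proof is correct, and both routes you sketch (the discrete-logarithm computation with a generator of the cyclic group $\gf_q^*$, and the kernel/image counting argument) are the standard ones. The paper itself offers no proof of this fact --- it simply recalls it as well known and cites Ireland and Rosen --- and your argument is essentially the textbook proof found there, so there is nothing to reconcile.
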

Also recall the following facts, which can be easily proved.
\begin{fact}\label{gpfact}
Let $x\in\gf_q^*$ of multiplicative order $d$. 
For $m,n\in\nn$, if $x^n\neq 1$ and $x^{nm}=1$, then $\gcd(d,m)>1$.
\end{fact}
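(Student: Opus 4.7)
The statement is a basic divisibility fact about multiplicative orders, so the plan is very short. The key tool is the defining property of the order $d$ of $x$: for any integer $k$, we have $x^k = 1$ if and only if $d \mid k$.

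First I would translate the two hypotheses into divisibility statements. From $x^{nm} = 1$, we get $d \mid nm$. From $x^n \neq 1$, we get $d \nmid n$. These are the only facts we will use about $x$ itself; after this point the argument is purely about integers.

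Next I would argue by contradiction. Suppose $\gcd(d, m) = 1$. Since $d \mid nm$ and $d$ is coprime to $m$, a standard application of Euclid's lemma (or B\'ezout's identity, writing $1 = ad + bm$ and multiplying by $n$) yields $d \mid n$. This contradicts $x^n \neq 1$, so we must have $\gcd(d, m) > 1$.

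There is no real obstacle here; the only thing to be careful about is simply invoking the correct direction of the order characterization and the coprimality cancellation. The proof will be two or three lines long.
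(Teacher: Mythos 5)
Your proof is correct: the paper states this fact without proof (``which can be easily proved''), and your argument --- translating the hypotheses into $d \mid nm$ and $d \nmid n$, then using coprimality of $d$ and $m$ to cancel and derive a contradiction --- is exactly the standard argument the authors have in mind. Nothing is missing.
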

\begin{fact}\label{ellpwr}
Let $x\in\gf_q^*$ of multiplicative order $d$.  If 
$\ell$ is a prime, $m=\ord_\ell(n)$, and $x^n$ is a nontrivial 
$\ell$-th root of unity, then $\ell^{m+1}$ divides $d$.
\end{fact}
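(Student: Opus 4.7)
My plan is to reduce the statement to the standard fact that in any cyclic group, a power $x^n$ of an element of order $d$ has order $d/\gcd(d,n)$, and then to extract the $\ell$-adic information by taking valuations. The hypothesis that $x^n$ is a nontrivial $\ell$-th root of unity is precisely the assertion that the order of $x^n$ equals $\ell$, so the first step is to rewrite this as the single numerical identity $d = \ell\cdot\gcd(d,n)$.

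With that identity in hand, the second step is to apply $\ord_\ell$ to both sides. Since $\ord_\ell$ is additive on products, this yields
\[
\ord_\ell(d) \;=\; 1 + \ord_\ell\!\bigl(\gcd(d,n)\bigr) \;=\; 1 + \min\bigl(\ord_\ell(d),\,m\bigr),
\]
using the hypothesis $\ord_\ell(n)=m$ in the last equality. The third step is to observe that the equation $a = 1 + \min(a,m)$ in the unknown nonnegative integer $a = \ord_\ell(d)$ has no solution with $a \le m$ (as that would give $a = 1+a$), so it forces $a \ge m+1$, which is exactly the conclusion $\ell^{m+1}\mid d$.

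There is no real obstacle here; the only thing to be careful about is to justify cleanly why the order of $x^n$ equals $\ell$ rather than $1$ or a proper divisor of $\ell$. This uses both parts of the hypothesis: $(x^n)^\ell = 1$ forces the order to divide $\ell$, and $x^n \ne 1$ excludes the possibility of order $1$; since $\ell$ is prime, the order must equal $\ell$. One could alternatively argue directly from Fact~\ref{gpfact} together with a division argument on $d\mid n\ell$ and $d\nmid n$, but the cyclic-group order formula gives the sharpest and shortest route, and in fact yields the stronger conclusion $\ord_\ell(d) = m+1$.
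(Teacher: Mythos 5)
Your proof is correct. The paper states this fact without proof (``which can be easily proved''), so there is no argument in the paper to compare against; your route via the cyclic-group order formula $\ord(x^n)=d/\gcd(d,n)$ followed by taking $\ell$-adic valuations is the natural one, each step is justified, and it in fact yields the sharper conclusion $\ord_\ell(d)=m+1$, which is more than the divisibility $\ell^{m+1}\mid d$ that the paper needs.
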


The following lemmas are useful for bounding the orders of the elements 
appearing in Theorems~\ref{2tower} and~\ref{3tower}.
\begin{lma}\label{gcdcomp}
Let $\ell,b\in\nn$ such that $b\equiv 1\pmod\ell$, and let $M,N\in\nn$  
with $M<N$.
Then
$$\gcd\left(\sum_{j=1}^\ell b^{\ell^M(\ell-j)},
\sum_{j=1}^\ell b^{\ell^N(\ell-j)}\right)=\ell;$$
and hence 
$\displaystyle\frac{1}{\ell}\sum_{j=1}^\ell b^{\ell^M(\ell-j)}$ and 
$\displaystyle\frac{1}{\ell}\sum_{j=1}^\ell b^{\ell^N(\ell-j)}$ 
are coprime.
\end{lma}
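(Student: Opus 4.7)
The plan is to rewrite each sum via a geometric series. Setting
\[
S_k := \sum_{j=1}^\ell b^{\ell^k(\ell-j)} = \sum_{i=0}^{\ell-1}\left(b^{\ell^k}\right)^i = \frac{b^{\ell^{k+1}}-1}{b^{\ell^k}-1},
\]
the claim reduces to showing $\gcd(S_M, S_N) = \ell$.

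First I would show that $\ell$ is the only prime capable of dividing both $S_M$ and $S_N$. Since $S_k$ divides $b^{\ell^{k+1}}-1$, any common prime divisor $p$ forces the multiplicative order of $b$ modulo $p$ to divide $\gcd(\ell^{M+1},\ell^{N+1}) = \ell^{M+1}$, so this order has the form $\ell^r$ with $0 \le r \le M+1$. If $r \le M$, then $b^{\ell^M} \equiv 1 \pmod p$, hence $S_M \equiv \ell \pmod p$ and $p = \ell$. Otherwise $r = M+1 \le N$, so $b^{\ell^N} \equiv 1 \pmod p$ and the same argument applied to $S_N$ again yields $p = \ell$.

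Next I would pin down the exact power of $\ell$ that divides the gcd. From $b \equiv 1 \pmod \ell$ we get $b^{\ell^k} \equiv 1 \pmod \ell$, so $S_k \equiv \ell \equiv 0 \pmod \ell$ and $\ell$ divides both sums. For the matching upper bound I would invoke Lifting the Exponent: for odd $\ell$, $\ord_\ell(b^n-1) = \ord_\ell(b-1) + \ord_\ell(n)$, which immediately yields $\ord_\ell(S_k) = \ord_\ell(b^{\ell^{k+1}}-1) - \ord_\ell(b^{\ell^k}-1) = 1$. For $\ell = 2$ the formula collapses to $S_k = b^{2^k}+1$, and since $M < N$ forces $N \ge 1$, we have $b^{2^N} \equiv 1 \pmod 8$, hence $\ord_2(S_N) = 1$.

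Putting the pieces together, $\ell$ divides $\gcd(S_M, S_N)$ but no larger prime power does, so $\gcd(S_M, S_N) = \ell$ and the quotients are coprime. The only delicate point is the case $\ell = 2$, because Lifting the Exponent is less uniform there and $\ord_2(S_0) = \ord_2(b+1)$ can be arbitrarily large; the constraint $N > M$, which guarantees $N \ge 1$, is precisely what lets us use $S_N$ rather than $S_M$ to bound the $2$-adic valuation of the gcd.
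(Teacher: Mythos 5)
Your proof is correct, but it takes a genuinely different and somewhat heavier route than the paper's. Writing $S_k=\sum_{j=1}^\ell b^{\ell^k(\ell-j)}$, the paper makes two observations: reducing $S_N$ term by term modulo $b^{\ell^N}-1$ gives $\gcd\left(S_N,b^{\ell^N}-1\right)=\gcd\left(\ell,b^{\ell^N}-1\right)=\ell$; and, since $M<N$, repeated factoring of $b^{\ell^N}-1$ as a difference of $\ell$-th powers shows $S_M\mid b^{\ell^N}-1$. Hence $\gcd(S_M,S_N)$ divides $\ell$, and equals $\ell$ because $b\equiv 1\pmod\ell$ makes $\ell$ divide both sums. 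That single Euclidean reduction simultaneously rules out foreign primes and caps the power of $\ell$, with no case analysis and no primality hypothesis on $\ell$ (the statement only asks $\ell\in\nn$). You split these tasks in two: a multiplicative-order argument modulo each common prime $p$ (really the same congruence $S_k\equiv\ell$, applied mod $p$ instead of mod $b^{\ell^k}-1$), and then Lifting the Exponent, with a separate $\ell=2$ case, to pin the $\ell$-adic valuation of the gcd at $1$. Both steps are executed correctly --- in particular you rightly note that $N\ge 1$ rescues the $\ell=2$ case, since $\ord_2(S_0)=\ord_2(b+1)$ can be large --- but be aware that the phrase ``the order has the form $\ell^r$'' and the use of LTE both quietly assume $\ell$ is prime. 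This is harmless for the paper's applications ($\ell=2,3$), and your order argument can be patched for composite $\ell$ by replacing ``$r=M+1$'' with ``$d\nmid\ell^M$ but $d\mid\ell^{M+1}\mid\ell^N$'', but the paper's two-line Euclidean argument avoids the issue entirely.
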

\begin{proof}
The following computation follows from Euclid's algorithm:
\begin{equation}\label{gcd1}
\gcd\left(\sum_{j=1}^\ell b^{\ell^N(\ell-j)},b^{\ell^N}-1\right)
=\gcd\left(\ell,b^{\ell^N}-1\right)=\ell.
\end{equation}
Since $M<N$, repeatedly using the difference of $\ell$-th powers formula shows 
that
$\sum_{j=1}^\ell b^{\ell^M(\ell-j)}$ divides $b^{\ell^N}-1$.  
Also, since $b\equiv 1\pmod \ell$, it is clear that $\ell$ divides both 
$\sum_{j=1}^\ell b^{\ell^M(\ell-j)}$ and $\sum_{j=1}^\ell b^{\ell^N(\ell-j)}$. 
Therefore,
$$\gcd\left(\sum_{j=1}^\ell b^{\ell^M(\ell-j)},
\sum_{j=1}^\ell b^{\ell^N(\ell-j)}\right)=\ell.$$
\end{proof}
\begin{lma}\label{primebound}
Let $\ell,b,N\in\nn$ with $\ell$ prime  and $b\equiv 1\pmod\ell$.  
If $p$ is a prime dividing 
$\displaystyle\frac{1}{\ell}\sum_{j=1}^\ell b^{\ell^N(\ell-j)}$, then 
$p>\ell^{N+1}$.
\end{lma}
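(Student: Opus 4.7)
The plan is to study the multiplicative order of $b$ modulo $p$ and force it to be exactly $\ell^{N+1}$, so that Fermat's little theorem yields $\ell^{N+1}\mid p-1$.

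First, I would rewrite the sum as a ratio:
$$\sum_{j=1}^\ell b^{\ell^N(\ell-j)}=\frac{b^{\ell^{N+1}}-1}{b^{\ell^N}-1}.$$
Call this sum $S$. If $p$ is any prime dividing $S$, then $p$ divides $b^{\ell^{N+1}}-1$, so the order $d$ of $b$ modulo $p$ divides $\ell^{N+1}$; hence $d=\ell^k$ for some integer $0\le k\le N+1$.

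Next I would rule out every value $k\le N$. If $k\le N$ then $b^{\ell^N}\equiv 1\pmod p$, so every summand of $S$ reduces to $1$ modulo $p$ and $S\equiv\ell\pmod p$. Combined with $p\mid S$, this forces $p=\ell$. Thus to eliminate Case $k\le N$ it suffices to show that $\ell$ itself does not divide $S/\ell$, i.e., that $v_\ell(S)=1$. Setting $c=b^{\ell^N}$ and writing $c=1+\ell m$ (possible since $b\equiv 1\pmod\ell$ implies $c\equiv 1\pmod\ell$), the binomial expansion gives $c^i\equiv 1+i\ell m\pmod{\ell^2}$, so
$$S=\sum_{i=0}^{\ell-1}c^i\equiv \ell+\binom{\ell}{2}\ell m\equiv \ell\pmod{\ell^2}$$
for odd $\ell$; for $\ell=2$ the same conclusion follows by a direct lifting-the-exponent computation comparing $v_2(b^{2^{N+1}}-1)$ with $v_2(b^{2^N}-1)$. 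Either way, $v_\ell(S)=1$ and so $p=\ell$ cannot divide $S/\ell$.

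The remaining case is $d=\ell^{N+1}$, and then Fermat's little theorem gives $\ell^{N+1}\mid p-1$, so $p\ge\ell^{N+1}+1>\ell^{N+1}$, as desired. The main technical obstacle is the $v_\ell(S)=1$ computation, since it is what simultaneously forbids $p=\ell$ and legitimizes dividing the sum by $\ell$; everything else is a clean order-of-$b$ argument.
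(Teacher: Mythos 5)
Your proof is correct and follows essentially the same route as the paper: both arguments rest on showing $v_\ell\bigl(\sum_{j=1}^\ell b^{\ell^N(\ell-j)}\bigr)=1$ to exclude $p=\ell$, and then on forcing the multiplicative order of $b$ modulo $p$ to be divisible by $\ell^{N+1}$ so that $\ell^{N+1}\mid p-1$. The only cosmetic difference is that you compute the valuation directly (binomial expansion for odd $\ell$, lifting the exponent for $\ell=2$), whereas the paper extracts the same fact from its equation~\eqref{gcd1} and packages the order argument as Fact~\ref{ellpwr}.
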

\begin{proof}
Since $\ell\ge 2$ and $b\equiv 1\pmod\ell$, $\ell^2$ divides $(b^{\ell^N}-1)$. 
Hence, $p\neq\ell$ for otherwise, we have a contradiction with 
(\ref{gcd1}).  Thus, $p$ dividing 
$\frac{1}{\ell}\sum_{j=1}^\ell b^{\ell^N(\ell-j)}$ implies that 
$\sum_{j=1}^\ell b^{\ell^N(\ell-j)}\equiv 0\pmod p$.  So, $b^{\ell^N}$ is a 
nontrivial 
$\ell$-th root of unity modulo $p$.  Therefore, by 
Fact~\ref{ellpwr}, $\ell^{N+1}$ 
divides $p-1$, and hence $p>\ell^{N+1}$.
\end{proof}

The following two lemmas essentially give the necessary and sufficient 
conditions for completing the 
first step in the construction of our towers, i.e., under certain restrictions 
on $q$, they demonstrate the 
existence of $\alpha_0$ and $\beta_0$ each having its desired property.
The proofs involve counting $\gf_q$ solutions to equations via character sums.
We refer the reader to~\cite[Chapter 8]{IR:1990} for more on this technique.
As in~\cite{IR:1990}, for characters $\psi$ and $\lambda$ on $\gf_q$, we 
denote the Jacobi sum of $\psi$ and $\lambda$ by 
$J(\psi,\lambda):=\sum_{a+b=1}\psi(a)\lambda(b)$.
\begin{lma}\label{2start}
Let $q$ be a prime power.  Then there exists $\alpha_0\in\gf_q$ such that 
$\delta_0=\alpha_0^2-1$ is not a square in $\gf_q$ if and only if 
$q$ is odd.
\end{lma}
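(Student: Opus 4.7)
The plan is to handle the two implications separately. For the ``only if'' direction, if $q$ is a power of $2$, then the Frobenius map $x \mapsto x^2$ is a field automorphism of $\gf_q$ and therefore surjective. Consequently every element of $\gf_q$ is a square, so $\alpha_0^2 - 1$ is always a square and no $\alpha_0$ with the desired property can exist.

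For the ``if'' direction, assume $q$ is odd. I would count pairs $(\alpha_0, y) \in \gf_q^2$ satisfying $y^2 = \alpha_0^2 - 1$, or equivalently $\alpha_0^2 - y^2 = 1$. Since $q$ is odd, $2$ is invertible in $\gf_q$, and the substitution $u = \alpha_0 + y$, $v = \alpha_0 - y$ gives a bijection of $\gf_q^2$ that transforms the equation into $uv = 1$. The latter clearly has exactly $q - 1$ solutions. Exactly two of these correspond to $y = 0$ (namely $\alpha_0 = \pm 1$), and the remaining $q - 3$ split into pairs $\{(\alpha_0, y), (\alpha_0, -y)\}$ with $y \neq 0$. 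So there are $(q-3)/2$ values of $\alpha_0 \in \gf_q \setminus \{\pm 1\}$ for which $\alpha_0^2 - 1$ is a nonzero square, leaving exactly $(q - 2) - (q - 3)/2 = (q - 1)/2 \geq 1$ values for which $\alpha_0^2 - 1$ is a nonzero non-square.

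The argument is essentially a direct conic count and presents no real obstacle. One could alternatively evaluate the character sum $\sum_{\alpha_0 \in \gf_q} \chi(\alpha_0^2 - 1)$, where $\chi$ is the quadratic character on $\gf_q$, in the spirit of the Jacobi sum technique the paper invokes for the companion Lemma~\ref{3start}, and reach the same conclusion; but the substitution $u = \alpha_0 \pm y$ is cleaner here. The lower bound $(q-1)/2 \geq 1$ already covers the smallest odd case $q = 3$, so no separate base cases are required.
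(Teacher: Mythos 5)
Your proof is correct. It follows the same overall strategy as the paper's proof --- count the solutions to $x^2-y^2=1$ in $\gf_q^2$ and compare with what the count would be if $\alpha_0^2-1$ were always a square --- but the two key steps are carried out differently. The paper evaluates the solution count via characters and the Jacobi sum $J(\tau,\tau)$, in parallel with the technique it needs for Lemma~\ref{3start}; you instead get the count $q-1$ from the elementary linear substitution $u=\alpha_0+y$, $v=\alpha_0-y$ (valid since $2$ is invertible), which turns the conic into $uv=1$. You also replace the paper's proof by contradiction with a direct enumeration, correctly separating the two solutions with $y=0$ from the $q-3$ solutions with $y\neq 0$ and pairing the latter, which yields the sharper conclusion that exactly $(q-1)/2$ values of $\alpha_0$ work rather than merely that one exists. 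Your route is more elementary and more informative for this particular lemma; the paper's character-sum route has the advantage of generalizing uniformly to the cubic case, where no such linear change of variables is available and the Jacobi sum bound $|J(\chi,\chi)|\le\sqrt q$ is genuinely needed.
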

\begin{proof}
First, note that if $q$ is even, then every element of $\gf_q$ is a square. 
So, we assume that $q$ is odd.
We desire $\alpha_0\in\mathbb{F}_q^*$ such that $\alpha_0^2-1$ is not a square. 
Our method for proving that such an $\alpha_0$ exists involves counting 
solutions
to the equation $x^2-y^2=1$.  
Let $\tau$ be the unique character of exact order 2 on $\mathbb{F}_q$. Then
\begin{eqnarray*}
\#\{(x,y)\in\mathbb{F}_q^2:x^2-y^2=1\}
&=&\sum_{\substack{a,b\in\gf_q,\\ a+b=1}}
\left(\sum_{j=0}^1{\tau^j(a)}\right)\left(\sum_{j=0}^1{\tau^j(-b)}\right)\\
&=&\sum_{i=0}^{1} \sum_{j=0}^{1} \tau^j(-1)J(\tau^i,\tau^j)\\
&=&q+\tau(-1)J(\tau,\tau)
=q-1.
\end{eqnarray*}

On the other hand, if $\alpha_0^2-1$ is a square for all choices of $\alpha_0$,
then $\alpha_0^2-1=y^2$ has a solution for all
$\alpha_0\in\mathbb{F}_q$. In this case, we have
\begin{eqnarray*}
\#\{(x,y)\in\mathbb{F}_q^2:x^2-y^2=1\}
&=&\sum_{\alpha_0\in\mathbb{F}_q} \#\{y\in\gf_q:y^2=\alpha_0^2-1\}\\
&=&\sum_{\alpha_0^2=1}1+\sum_{\alpha_0^2\neq 1}2=2+2(q-2)=2q-2.
\end{eqnarray*}
Thus, the assumption that $\alpha_0^2-1$ is always a square leads to the 
conclusion $q-1=2q-2$, which implies $q=1$, a contradiction.
\end{proof}

\begin{lma}\label{3start}
Let $q$ be a prime power.  Then there exists 
$\beta_0\in\gf_q$ such that $\gamma_0=\beta_0^3-1$ is not a cube in $\gf_q$
if and only if $q\equiv 1\pmod 3$ and $q\neq 4$.
\end{lma}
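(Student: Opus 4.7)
The plan is to prove both directions, the ``only if'' by elementary case analysis and the ``if'' by counting $\gf_q$-solutions to $x^3 - y^3 = 1$ via Jacobi sums, entirely paralleling the proof of Lemma~\ref{2start}.

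For the ``only if'' direction, if $q \not\equiv 1 \pmod 3$, then either $\chr \gf_q = 3$ (so cubing is the Frobenius, a bijection on $\gf_q$) or $\gcd(3, q-1) = 1$ (so cubing is a bijection on $\gf_q^*$); either way every element of $\gf_q$ is a cube, and hence so is $\beta_0^3 - 1$ for every $\beta_0$. For $q = 4$, a direct check suffices: the cubes in $\gf_4$ are exactly $\{0, 1\}$, and $\beta_0^3 \in \{0, 1\}$ for every $\beta_0 \in \gf_4$, so (in characteristic $2$) $\beta_0^3 - 1 \in \{0, 1\}$ is always a cube.

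For the ``if'' direction, assume $q \equiv 1 \pmod 3$, $q \neq 4$, and suppose for contradiction that $\beta_0^3 - 1$ is a cube for every $\beta_0$. I would count $N := \#\{(x,y) \in \gf_q^2 : x^3 - y^3 = 1\}$ two ways. First, the hypothesis gives exactly one $y$ when $x^3 = 1$ (three such $x$, since $q \equiv 1 \pmod 3$) and three $y$'s otherwise, producing $N = 3q - 6$. Second, let $\chi$ be a cubic character on $\gf_q^*$ extended by $\chi(0) = 0$, and let $\chi^0 \equiv 1$ be the trivial character, so that $\#\{y : y^3 = a\} = \sum_{i=0}^{2} \chi^i(a)$ uniformly in $a \in \gf_q$. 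Expanding $N = \sum_a \sum_{i,j} \chi^i(a)\chi^j(a-1)$ and isolating the principal-character contributions reduces to
\[ N = q + \sum_{i,j \in \{1,2\}} \chi^j(-1) J(\chi^i,\chi^j). \]
The standard identities $\chi(-1) = 1$ (since $\chi(-1)^2 = \chi(1) = 1$ and $\chi(-1)$ is a cube root of unity), $J(\chi,\bar\chi) = -\chi(-1) = -1$ (since $\chi\bar\chi$ is trivial while $\chi$ is not), $J(\bar\chi,\bar\chi) = \overline{J(\chi,\chi)}$, and $|J(\chi,\chi)|^2 = q$ collapse this to $N = q - 2 + 2\,\mathrm{Re}\,J(\chi,\chi)$. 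Equating the two counts forces $\mathrm{Re}\,J(\chi,\chi) = q - 2$, and combining with $|\mathrm{Re}\,J(\chi,\chi)| \leq \sqrt{q}$ yields $q \leq 4$, contradicting $q \neq 4$.

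The main subtlety is bookkeeping in the character-sum expansion --- carefully handling the zero values of $\chi$, separating out the principal-character contributions, and invoking the correct sign in $J(\chi,\bar\chi) = -\chi(-1)$ --- but the mechanics are entirely analogous to Lemma~\ref{2start}, just with one more nontrivial character to track, and the arithmetic naturally isolates $q = 4$ as the unique exceptional case.
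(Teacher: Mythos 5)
Your proof is correct and follows essentially the same route as the paper's: counting $\gf_q$-solutions of $x^3-y^3=1$ with cubic characters and Jacobi sums, comparing against the count $3q-6$ forced by the assumption that $\beta_0^3-1$ is always a cube, and using $|J(\chi,\chi)|=\sqrt{q}$ to isolate $q=4$. The only difference is cosmetic: you verify the $q=4$ exceptional case by direct inspection of $\gf_4$, whereas the paper leaves that direction of the equivalence implicit.
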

\begin{proof}
First, note that if $q\not\equiv 1\pmod 3$, then every element of $\gf_q$ is 
a cube.  So, we will assume that $q\equiv 1\pmod 3$.  As mentioned earlier, 
this means that $\gf_q$ contains a primitive third root of unity.
We now count $\gf_q$ 
solutions to the equation $x^3-y^3=1$.  Let $\chi$ be any character of 
order $3$ on $\gf_q$.
\begin{eqnarray*}
\#\{(x,y)\in\gf_q^2:x^3-y^3=1\}
&=&\sum_{\substack{a,b\in\gf_q,\\ a+b=1}}\left(\sum_{j=0}^2\chi^j(a)\right)
\left(\sum_{j=0}^2\chi^j(-b)\right)\\
&=&\sum_{i=0}^2\sum_{j=0}^2\chi^{j}(-1)J(\chi^i,\chi^j)\\
&=&q-2\chi(-1)+J(\chi,\chi)+J(\chi^2,\chi^2)\\
&=&q-2+2\mathrm{Re}J(\chi,\chi).
\end{eqnarray*}
On the other hand, if we assume that $\beta_0^3-1$ is a cube for all choices of 
$\beta_0\in\gf_q$, then
\begin{eqnarray*}
\#\{(x,y)\in\gf_q^2:x^3-y^3=1\}
&=&\sum_{\beta_0\in\gf_q}\#\{y\in\gf_q:\beta_0^3-y^3=1\}\\
&=&\sum_{\beta_0^3=1}1+\sum_{\beta_0^3\neq 1}3
=3+3(q-3)=3q-6.
\end{eqnarray*}
Thus, the assumption that $\beta_0^3-1$ is always a cube leads to the 
conclusion that $|2q-4|=|(3q-6)-(q-2)|=|2\mathrm{Re}J(\chi,\chi)|\le 2\sqrt q$, 
which implies $|q-2|\le\sqrt q$.  
This implies that $(q-1)(q-4)\le 0$.
The only $q\equiv 1\pmod 3$ satisfying this inequality is $q=4$.
\end{proof}

\section{The Quadratic Tower for Odd Characteristic}\label{quadsect}
In this section, we consider the first tower, which is 
recursively constructed using~\eqref{2towerpoly}.  Throughout 
this section we will assume that $p$ is an odd prime and that 
$q=p^m\equiv 1\pmod 4$.  In particular, 
if $p\equiv 3\pmod 4$, then
$2|m$.  As discussed in the introduction, this condition ensures the existence 
of a primitive fourth root of unity.  This will be seen to be a necessary 
ingredient in the construction of our tower.  
We also fix $\alpha_0$ such that $\delta_0=\alpha_0^2-1$ is not a 
square in $\gf_q$.  Recall that that Lemma~\ref{2start} ensures the existence 
of such an $\alpha_0$.

Before moving forward, we need to establish the relationship between $\delta_n$
and $\delta_{n-1}$.  
From~\eqref{2towerpoly} and the definition of $\delta_n$ 
(see Theorem~\ref{2tower}), we deduce 
that $\delta_{n-1}$ and $\delta_n$ are related by 
$F(\delta_{n-1},\delta_n)=0$ ($n\ge 1$), where 
\begin{equation}\label{deltarel}
F(X,Y):=Y^2-(48X+64X^2)Y-64X.
\end{equation}
We also fix the following more compact notation for the norm. We take 
\begin{eqnarray*}
\N_{n,j}:\gf_{q^{2^n}}&\rightarrow&\gf_{q^{2^{n-j}}},\\
\alpha&\mapsto&\alpha^{\prod_{k=1}^{j}(q^{2^{n-k}}+1)}.
\end{eqnarray*}

For the purpose of making the proof easier to 
digest, we break Theorem~\ref{2tower} into a pair of propositions.
\begin{prop}\label{2deg}
The elements $\alpha_n$ and $\delta_n$ have degree $2$ over $\gf_{q^{2^{n-1}}}$ 
for $n\ge 1$.
\end{prop}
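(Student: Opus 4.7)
My plan is to argue by induction on $n$, strengthening the inductive hypothesis to say both that $\alpha_n$ has degree $2^n$ over $\gf_q$ (so $\gf_q(\alpha_n)=\gf_{q^{2^n}}$) and that $\delta_n$ is a non-square in $\gf_{q^{2^n}}$. The base case $n=0$ is immediate: $\alpha_0\in\gf_q$ trivially, and $\delta_0$ is a non-square by the choice of $\alpha_0$ (Lemma~\ref{2start}).

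For the inductive step, the key computation is the discriminant of $f(\alpha_{n-1},Y)$ viewed as a quadratic in $Y$, which factors as $64\alpha_{n-1}^{2}\delta_{n-1}$. The inductive hypothesis together with $q\equiv 1\pmod 4$ (so $-1$ is a square in $\gf_q$) forces $\delta_{n-1}\neq -1$, hence $\alpha_{n-1}\neq 0$, and then $64\alpha_{n-1}^{2}$ is a nonzero square. Since $\delta_{n-1}$ is a non-square in $\gf_{q^{2^{n-1}}}$, the entire discriminant is a non-square; therefore $f(\alpha_{n-1},Y)$ is irreducible over $\gf_{q^{2^{n-1}}}$ and $\alpha_n$ has degree $2$ over it. To see that $\delta_n$ also has degree $2$ and does not already lie in $\gf_{q^{2^{n-1}}}$, I argue by contradiction: if $\delta_n\in\gf_{q^{2^{n-1}}}$, then $\alpha_n^{2}=\delta_n+1$ lies there too, the minimal polynomial of $\alpha_n$ must be $Y^{2}-\alpha_n^{2}$, and matching this against $f(\alpha_{n-1},Y)$ forces $6-8\alpha_{n-1}^{2}=0$, i.e.\ $\delta_{n-1}=-1/4$; but $-1/4$ is a square under $q\equiv 1\pmod 4$, contradicting the inductive hypothesis.

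To close the induction I still must upgrade the non-squareness from $\delta_{n-1}$ to $\delta_n$. Here I would invoke the relation $F(\delta_{n-1},\delta_n)=0$ from~\eqref{deltarel}: the product of the two roots of $Y^{2}-(48\delta_{n-1}+64\delta_{n-1}^{2})Y-64\delta_{n-1}$ is $-64\delta_{n-1}$, so the relative norm satisfies $\N_{n,1}(\delta_n)=-64\delta_{n-1}$. Since $-64=(8i)^{2}$ in $\gf_q$ (using a primitive fourth root of unity $i$, which exists because $q\equiv 1\pmod 4$), if $\delta_n$ were a square then $\delta_{n-1}=-64^{-1}\N_{n,1}(\delta_n)$ would be a square in $\gf_{q^{2^{n-1}}}$, contradicting the inductive hypothesis. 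I expect this last step to be the subtle one: the recursion preserves non-squareness precisely because the constant $-64$ appearing in the norm is itself a square, which is exactly where the standing assumption $q\equiv 1\pmod 4$ does its essential work.
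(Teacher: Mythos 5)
Your proof is correct and follows essentially the same route as the paper: induction, irreducibility of $f(\alpha_{n-1},Y)$ via the non-square discriminant, coefficient comparison to rule out $\delta_n\in\gf_{q^{2^{n-1}}}$, and the norm identity $\N_{n,1}(\delta_n)=-64\delta_{n-1}$ together with $-64$ being a square to propagate non-squareness (the paper phrases this last step via Euler's criterion, $(-64\delta_{n-1})^{(q^{2^{n-1}}-1)/2}=-1$, rather than ``the norm of a square is a square,'' but the content is identical). If anything you are slightly more careful than the paper, which calls the discriminant ``$\delta_{n-1}$'' rather than $64\alpha_{n-1}^2\delta_{n-1}$ and does not explicitly dispose of the case $\alpha_{n-1}=0$.
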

\begin{proof}
First note that the discriminant of $f(\alpha_{n-1},Y)$ is $\delta_{n-1}
=\alpha_{n-1}^2-1$ for all $n\ge 1$.  
We will proceed by induction on $n$.  
Recall that $\alpha_0$ was chosen so that $\delta_0$,
the discriminant of $f(\alpha_0,Y)$, is not a square in $\gf_q$.  
Thus, $\alpha_1$ satisfies 
an irreducible polynomial of degree $2$ over $\gf_q$, i.e., $\alpha_1$ has 
degree $2$ over $\gf_q$.
We may take $\{1,\alpha_1\}$ as a basis for $\gf_q(\alpha_1)$ over $\gf_q$.  
Writing $\delta_1$ in terms of the basis, we 
have $\delta_1=\alpha_1^2-1=(8\alpha_0^2-6)\alpha_1+(8\alpha_0^2-10)$.
So, $\delta_1\in\gf_q$ if and only if $8\alpha_0^2-6=0$.
If $8\alpha_0^2-6=0$, then $\delta_0=\alpha_0^2-1=-4^{-1}$, which is a 
square in $\gf_q$ since $\gf_q$ contains the fourth roots of unity.
This is contrary to our choice of $\alpha_0$.
Thus, $\delta_1$ has degree $2$ over $\gf_q$ as well.

Now, suppose that $\alpha_{k}$ and $\delta_k$ both
have degree $2$ over $\gf_{q^{2^{k-1}}}$ for $1\le k\le n$.  
Then $f(\alpha_{n-1},Y)$ is the minimum polynomial of $\alpha_n$ over 
$\gf_{q^{2^{n-1}}}$; and hence, the discriminant 
is not a square in $\gf_{q^{2^{n-1}}}$.  In particular, 
\begin{equation}\label{discnotsq}
\delta_{n-1}^{(q^{2^{n-1}}-1)/2}=-1.
\end{equation}
Observe that $F(\delta_{n-1},Y)$ is the minimum polynomial of 
$\delta_n$ over $\gf_{q^{2^{n-1}}}$.  To prove that the degree of 
$\alpha_{n+1}$ 
over $\gf_{q^{2^n}}$ is 2, we show that $f(\alpha_n,Y)$ is irreducible 
over $\gf_{q^{2^n}}$.  
Now,
\begin{eqnarray*}
\delta_n^{(q^{2^n}-1)/2}
&=&\left(\delta_n^{(q^{2^{n-1}}+1)}\right)^{(q^{2^{n-1}}-1)/2}
=\left(\mathrm{N}_{n,1}(\delta_n)\right)^{(q^{2^{n-1}}-1)/2}\\
&=&\left(-64\delta_{n-1}\right)^{(q^{2^{n-1}}-1)/2}
=-1.
\end{eqnarray*}
Here we have used~\eqref{discnotsq} and the fact that $-64$ is a square in 
$\gf_{q^{2^{n-1}}}$ since $\gf_q$ contains the fourth roots of unity.
Thus, $\delta_n$ is not a square, and hence $f(\alpha_n,Y)$ is irreducible.
So, the set $\{1,\alpha_{n+1}\}$ forms a basis for $\gf_{q^{2^{n+1}}}$ over 
$\gf_{q^{2^{n}}}$.
Now, we write $\delta_{n+1}$ in terms of the basis, 
and apply the same argument 
as for $\delta_1$ to demonstrate that the degree 
of $\delta_{n+1}$ over $\gf_{q^{2^{n}}}$ is $2$ as well.
This completes the induction and the proof.
\end{proof}

An easy induction proof, exploiting the fact that $F(\delta_{k-1},Y)$ is 
the minimum polynomial of $\delta_k$ over $\gf_{q^{2^{k-1}}}$ for $1\le k\le n$,
shows that 
\begin{equation}\label{normcomp}
\N_{n,j}(\delta_n)=(-64)^{(2^{j}-1)}\delta_{n-j}
\end{equation}
for $1\le j\le n$.  This fact will be useful in the proof of the proposition 
below.
\begin{prop}\label{2order}
The order of $\delta_n$ in $\gf_{q^{2^n}}$ is greater than 
$2^{\frac{1}{2}n^2+\frac{3}{2}n+\ord_2(q-1)}$ unless $q\equiv 2\pmod 3$ and 
$\alpha_0=\pm\left(\frac{p-1}{2}\right)$,
in which case the order of $\delta_n$
is greater than
$2^{\frac{1}{2}n^2+\frac{3}{2}n+\ord_2(q-1)-1}$.
\end{prop}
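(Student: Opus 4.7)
The plan is to lower-bound $d_n := \ord(\delta_n)$ by extracting the full $2$-part of $q^{2^n}-1$ and then one distinct large odd prime at each of the levels $k=0,1,\dots,n-1$. The target exponent decomposes as $\tfrac12 n^2+\tfrac32 n+\ord_2(q-1) = (n+\ord_2(q-1))+\sum_{k=0}^{n-1}(k+1)$, matching exactly this split: a factor of $2^{n+\ord_2(q-1)}$ from the $2$-part of $d_n$, and factors of $2^{k+1}$ from $n$ pairwise coprime odd primes.

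The $2$-part is essentially computed inside Proposition~\ref{2deg}: the chain
$\delta_n^{(q^{2^n}-1)/2}=(\N_{n,1}(\delta_n))^{(q^{2^{n-1}}-1)/2}=(-64\delta_{n-1})^{(q^{2^{n-1}}-1)/2}=-1$
holds by induction, using that $-64$ is a square in $\gf_q$ when $q\equiv 1\pmod 4$ and the base case $\delta_0^{(q-1)/2}=-1$. Hence $d_n\nmid (q^{2^n}-1)/2$. Since $q\equiv 1\pmod 4$ forces $q^{2^j}+1\equiv 2\pmod 4$ for $0\le j\le n-1$, we get $\ord_2(q^{2^n}-1)=\ord_2(q-1)+n$, so $2^{n+\ord_2(q-1)}$ exactly divides $d_n$.

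For the odd primes, for each $k\in\{0,\dots,n-1\}$ I aim to show $\gcd(d_n,(q^{2^k}+1)/2)>1$, or equivalently, $\delta_n^{2(q^{2^n}-1)/(q^{2^k}+1)}\ne 1$. The identity $(q^{2^n}-1)/(q^{2^k}+1)=(q^{2^k}-1)\prod_{j=k+1}^{n-1}(q^{2^j}+1)$ together with~\eqref{normcomp} gives
$$\delta_n^{2(q^{2^n}-1)/(q^{2^k}+1)}=u^{2(q^{2^k}-1)},\qquad u:=(-64)^{2^{n-k-1}-1}\delta_{k+1}\in\gf_{q^{2^{k+1}}},$$
which equals $1$ iff $u^2\in\gf_{q^{2^k}}$ iff $\delta_{k+1}^2\in\gf_{q^{2^k}}$. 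From the minimum polynomial $F(\delta_k,Y)$ one reads $\delta_{k+1}^2=(48\delta_k+64\delta_k^2)\delta_{k+1}+64\delta_k$, which lies in $\gf_{q^{2^k}}$ iff $\delta_k(48+64\delta_k)=0$, i.e., $\delta_k\in\{0,-3/4\}$. For $k\ge 1$ this never holds since $\delta_k$ has degree $2^k>1$ over $\gf_q$ and thus is not in $\gf_q$; for $k=0$, $\delta_0\ne 0$ since $\delta_0$ is a non-square, and the only remaining obstruction $\delta_0=-3/4$ forces $\alpha_0^2=1/4$, i.e., $\alpha_0=\pm(p-1)/2\in\gf_p$, and makes $\delta_0$ non-square precisely when $-3$ is a non-square in $\gf_q$, equivalent to $q\equiv 2\pmod 3$.

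Putting everything together: for every eligible $k$, $d_n$ shares an odd prime divisor $p_k$ with $(q^{2^k}+1)/2$; Lemma~\ref{primebound} yields $p_k>2^{k+1}$, and Lemma~\ref{gcdcomp} (applied with $\ell=2$, $b=q$) makes the $p_k$ pairwise coprime and coprime to $2$. Multiplying,
$$d_n>2^{n+\ord_2(q-1)}\prod_k 2^{k+1},$$
which gives $d_n>2^{(n^2+3n)/2+\ord_2(q-1)}$ in the generic case and $d_n>2^{(n^2+3n)/2+\ord_2(q-1)-1}$ when the $k=0$ factor drops out in the exceptional case. The most delicate step will be correctly isolating $-3/4$ as the unique obstruction at the base level and translating it into the normal form $\alpha_0=\pm(p-1)/2$ under $q\equiv 2\pmod 3$; everything else is a bookkeeping exercise in cyclotomic arithmetic combined with the norm identity~\eqref{normcomp}.
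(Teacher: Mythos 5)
Your proposal is correct and follows essentially the same route as the paper: the identical $2$-part computation via the norm chain, the same use of~\eqref{normcomp} to reduce $\gcd(d_n,(q^{2^k}+1)/2)>1$ to $\delta_{k+1}^2\notin\gf_{q^{2^k}}$, the same reading of the obstruction $\delta_k\in\{0,-3/4\}$ off the coefficient of $\delta_{k+1}$ in $F(\delta_k,Y)$, and the same invocation of Lemmas~\ref{gcdcomp} and~\ref{primebound}; only your indexing ($k=n-j$) differs.
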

\begin{proof}
We first compute the power of $2$ dividing the order of $\delta_n$.
Recall from the proof of Proposition~\ref{2deg}
that $\delta_n^{(q^{2^n}-1)/2}\neq 1$; but of course, 
$\delta_n^{(q^{2^n}-1)}=1$ since $\delta_n\in\gf_{q^{2^n}}$.  
Since $q\equiv 1\pmod 4$, 
$\ord_2(q^{2^{j}}+1)=1$
for each $j\ge 1$.  
Repeatedly using the difference of 
squares formula, we have
\begin{eqnarray*}
\ord_2\left(\frac{q^{2^n}-1}{2}\right)
&=&\ord_2(q-1)-1+\sum_{j=0}^{n-1}\ord_2(q^{2^j}+1)\\
&=&n-1+\ord_2(q-1).
\end{eqnarray*}
Thus, $2^{n+\ord_2(q-1)}$ divides the order of $\delta_n$ by Fact~\ref{ellpwr}.

Now we look for odd primes dividing the order. 
By Fact~\ref{gpfact}, the order of $\delta_n$ has a common factor with 
$(q^{2^{n-j}}+1)/2$ for each $j$ such that the
$\frac{(q^{2^n}-1)}{(q^{2^{n-j}}+1)/2}$ power of $\delta_n$ is not equal to $1$.
By~\eqref{normcomp}, we have that the 
$\frac{(q^{2^n}-1)}{(q^{2^{n-j}}+1)/2}$ power of $\delta_n$ is equal to
\begin{eqnarray*}
\left(\mathrm{N}_{n,j-1}(\delta_{n})\right)^{2(q^{2^{n-j}}-1)}
=((-64)^{(2^{(j-1)}-1)}\delta_{n-j+1})^{2(q^{2^{n-j}}-1)}
=(\delta_{n-j+1})^{2(q^{2^{n-j}}-1)}
\neq 1
\end{eqnarray*}
provided that 
$\delta_{n-j+1}^2\not\in\gf_{q^{2^{n-j}}}$.
From~\eqref{deltarel}, we know that we may write $\delta_{n-j+1}^2$ as 
\begin{equation*}
\delta_{n-j+1}^2=(48\delta_{n-j}+64\delta_{n-j}^2)\delta_{n-j+1}+64\delta_{n-j}.
\end{equation*}
Thus, $\delta_{n-j+1}^2\in\gf_{q^{n-j}}$ if and only if $\delta_{n-j}$ 
satisfies the equation $48\delta_{n-j}+64\delta_{n-j}^2=0$.
If this were the case, then $\delta_{n-j}=0$ or $\delta_{n-j}=-3^{-1}4$.
By Proposition~\ref{2deg}, this implies that $n=j$.  However, $\delta_0=0$ 
contradicts the choice of $\alpha_0$; and $\delta_0=-4^{-1}3$ contradicts 
the choice of $\alpha_0$ unless $-3$ is not a perfect square, that is, unless 
$q\equiv 2\pmod 3$.  If $q\equiv 2\pmod 3$, then the only choices of $\alpha_0$ 
that give $\delta_0=-4^{-1}3$ are $\alpha_0=\pm\left(\frac{p-1}{2}\right)$.
Thus, the order of $\delta_n$ has a common factor with $(q^{2^{n-j}}+1)/2$ 
for each $1\le j\le n$ unless $q\equiv 2\pmod 3$, 
$\alpha_0=\pm\left(\frac{p-1}{2}\right)$, and $j=n$.
Each of these factors must be odd since 
$\ord_2(q^{2^{n-j}}+1)=1$ as noted above.
By Lemma~\ref{gcdcomp} with $\ell=2$ and $b=q$, we see that these factors 
must be pairwise coprime as well.
Hence, we get either $n$ or $n-1$ 
distinct odd prime factors dividing the order 
of $\delta_n$ depending on the case.  
By Lemma~\ref{primebound}, each such prime factor 
must bounded below by $2^{n-j+1}$.  Therefore,
the order of $\delta_n$ is bounded below by
\begin{eqnarray*}
2^{n+\ord_2(q-1)}\prod_{j=1}^{n}2^{n-j+1}
&=&2^{n+\ord_2(q-1)+n(n+1)/2}
=2^{\frac{n^2+3n}{2}+\ord_2(q-1)}
\end{eqnarray*}
unless $q\equiv 2\pmod 3$ and $\alpha_0=\pm\left(\frac{p-1}{2}\right)$, in 
which case the order is bounded below by 
$2^{\frac{1}{2}n^2+\frac{3}{2}n+\ord_2(q-1)-1}$.
\end{proof}
Theorem~\ref{2tower} follows by combining the two propositions.
The authors would like to point out that it is possible to achieve a slightly 
better lower bound for the order of
$\delta_n$ by the following method.  First, choose a 
square root of $\delta_{n-1}$, say $\sqrt{\delta_{n-1}}\in\gf_{q^{2^n}}$.  Then 
use the method above to prove a lower bound for the order of 
$\sqrt{\delta_{n-1}}$.  Finally,
deduce a bound for the order of $\delta_n$.  The improvement, however, only 
affects the coefficient of $n$ in the exponent.  Since computationally our 
bounds do not appear to be that close to the truth, we have decided to work 
directly with $\delta_n$ instead.

\section{The Cubic Tower for Characteristic not 3}\label{cubicsect}

In this section, we consider the second tower, which is recursively constructed 
using~\eqref{3towerpoly}.
Recall that, for this tower, we assume that $q\equiv 1\pmod 3$ and $q\neq 4$.
This means that $\gf_q$ will contain the third roots of unity, and hence the 
third roots of $-1$ as well.  We also fix a $\beta_0$ such that 
$\gamma_0=\beta_0^3-1$ is not a cube in $\gf_q$.  Recall that Lemma~\ref{3start}
ensures the existence of such a $\beta_0$.

Before we begin the proof of Theorem~\ref{3tower}, we need to 
establish the relationship between $\gamma_{n-1}$ and 
$\gamma_n$.  The relationship is given by 
$G(\gamma_{n-1},\gamma_n)=0$ for $n\ge 1$, where
\begin{equation}\label{gammarel}
G(X,Y):=Y^3-(270X+972X^2+729X^3)Y^2-(972X+729X^2)Y-729X.
\end{equation} 
This follows from~\eqref{3towerpoly} and the definition of $\gamma_n$.
We also fix the following notation for the norm.
\begin{eqnarray*}
\mathrm{N}_{n,j}:\gf_{q^{3^n}}&\rightarrow&\gf_{q^{3^{n-j}}},\\
\beta&\mapsto&
\beta^{\prod_{k=1}^j\left(\left(q^{3^{n-k}}\right)^2+q^{3^{n-k}}+1\right)}.
\end{eqnarray*}

As in section~\ref{quadsect}, we break the result into two smaller propositions.
\begin{prop}\label{3deg}
The elements $\beta_n$ and $\gamma_n$ both have degree $3$ over 
$\gf_{q^{3^{n-1}}}$ for $n\ge 1$.
\end{prop}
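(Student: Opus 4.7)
The plan is to prove, by induction on $n$, the stronger statement that additionally $\gamma_n$ is not a cube in $\gf_{q^{3^n}}$; the base case $n=0$ is the content of Lemma~\ref{3start}. The whole argument will parallel that of Proposition~\ref{2deg}, the only new ingredient being a cubic analogue of the ``discriminant equals $\delta_{n-1}$'' mechanism used there.

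That analogue is the identity
\[
g(X,Y) \;=\; X^3(Y-1)^3 \,-\, (X^3-1)(Y+2)^3,
\]
obtained from the elementary expansion $(Y+2)^3-(Y-1)^3=9(Y^2+Y+1)$. Since a cubic is irreducible exactly when it has no root in the base field, this yields the bridge I plan to exploit: for any field $K\supseteq\gf_q$ and any $X\in K$, the cubic $g(X,Y)\in K[Y]$ is irreducible over $K$ if and only if $X^3-1$ is not a cube in $K$. Indeed, from a root $Y_0\ne -2$ one reads off the cube root $X(Y_0-1)/(Y_0+2)$ of $X^3-1$ (and $Y_0=-2$ would force $X=0$, where $X^3-1=-1$ is already a cube); conversely, a cube root $D$ of $X^3-1$ gives the explicit root $Y_0=(X+2D)/(X-D)$, whose denominator is nonzero since $D^3-X^3=-1\ne 0$.

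With the bridge available, the inductive step runs in three moves. First, the hypothesis that $\gamma_{n-1}$ is not a cube in $\gf_{q^{3^{n-1}}}$ immediately gives the irreducibility of $g(\beta_{n-1},Y)$ over $\gf_{q^{3^{n-1}}}$, so $\beta_n$ has degree $3$. Second, expanding $\gamma_n=\beta_n^3-1$ against the basis $\{1,\beta_n,\beta_n^2\}$ using $g(\beta_{n-1},\beta_n)=0$ yields
\[
\gamma_n=(9\beta_{n-1}^3-6)\beta_n^2+(9\beta_{n-1}^3-12)\beta_n+9\gamma_{n-1},
\]
and for $\gamma_n$ to collapse into $\gf_{q^{3^{n-1}}}$ both non-constant coefficients would have to vanish, forcing $6=12$; this is impossible in characteristic different from $2$ and $3$, and in characteristic $2$ it reduces to $\beta_{n-1}=0$, which would make $\gamma_{n-1}=-1=1$ a cube, contradicting the induction. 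Third, to propagate the auxiliary hypothesis I would note that $G(\gamma_{n-1},Y)$ is now the minimum polynomial of $\gamma_n$ over $\gf_{q^{3^{n-1}}}$, so $\N_{n,1}(\gamma_n)=729\gamma_{n-1}=9^3\gamma_{n-1}$ is a cube multiple of $\gamma_{n-1}$; then Fact~\ref{pwrdetect} combined with
\[
\gamma_n^{(q^{3^n}-1)/3}=\N_{n,1}(\gamma_n)^{(q^{3^{n-1}}-1)/3}=\gamma_{n-1}^{(q^{3^{n-1}}-1)/3}\ne 1
\]
shows $\gamma_n$ is not a cube, closing the induction. The main obstacle is spotting the factorization of $g$ in the first place; once it is in hand, the rest of the argument mirrors the quadratic case almost line for line.
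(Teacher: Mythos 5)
Your proof is correct, and its overall architecture (induction, base case from Lemma~\ref{3start}, expanding $\gamma_n$ in the basis $\{1,\beta_n,\beta_n^2\}$, and pushing the ``not a cube'' property up the tower via $\N_{n,1}$) is the same as the paper's. The one place you genuinely diverge is the key irreducibility criterion: the paper merely asserts that ``by carefully examining the cubic formula'' one sees $g(\beta_{n-1},Y)$ is irreducible iff $\gamma_{n-1}$ is not a cube, whereas you supply the identity $g(X,Y)=X^3(Y-1)^3-(X^3-1)(Y+2)^3$, which I have checked and which makes the equivalence completely explicit in both directions (a root yields the cube root $X(Y_0-1)/(Y_0+2)$ of $X^3-1$, and a cube root $D$ yields the root $(X+2D)/(X-D)$). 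That is a real improvement in rigor on the step the paper leaves to the reader. Two small remarks: your computation of the norm as $729\gamma_{n-1}$ (the negative of the constant term of the monic cubic $G(\gamma_{n-1},Y)$) is actually the correct sign --- the paper writes $-729\gamma_{n-1}$ --- but since both $729=9^3$ and $-729=(-9)^3$ are cubes, nothing downstream is affected; and your characteristic-$2$ analysis of the vanishing coefficients ($\beta_{n-1}=0$, hence $\gamma_{n-1}=1$ a cube) reaches the same contradiction as the paper's by a slightly different route.
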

\begin{proof}
By carefully examining the cubic formula applied to the polynomial, one 
observes that $g(\beta_{n-1},Y)$ is irreducible if and only if 
$\gamma_{n-1}=\beta_{n-1}^3-1$ is not a cube in $\gf_{q^{3^{n-1}}}$.  
Thus, $\beta_n$ will have degree $3$ over $\gf_{q^{3^{n-1}}}$
if and only if $\gamma_{n-1}$ is not a cube in $\gf_{q^{3^{n-1}}}$ 
for all $n\ge 1$.
As with the proof of Proposition~\ref{2deg}, we proceed by induction on $n$.  
Recall that $\beta_0$ was chosen so that $\gamma_0$ is not a cube in $\gf_q$.
Thus, $\beta_1$ has degree $3$ 
over $\gf_q$.  So, we may take $\{1,\beta_1,\beta_1^2\}$ as a basis for 
$\gf_{q^3}$ over $\gf_q$.  Writing $\gamma_1$ in terms of the basis, we have
\begin{equation*}
\gamma_1=\beta_1^3-1
=(9\beta_{0}^3-6)\beta_1^2+(9\beta_{0}^3-12)\beta_1+(9\beta_{0}^3-9).
\end{equation*}
So, $\gamma_1\in\gf_q$ if and only if $9\beta_0^3-6=0$ and $9\beta_0^3-12=0$.
This leads to the conclusion that $\gamma_0=-3^{-1}$ and $\gamma_0=3^{-1}$, 
which implies that $2=0$, i.e., the 
characteristic is $2$.  In this case, we are led to the conclusion that
$\gamma_0=1$, which is a cube.  This of course is contrary to our choice of 
$\gamma_0$.  Therefore, $\gamma_1\not\in\gf_q$, i.e., the degree of $\gamma_1$ 
over $\gf_q$ is $3$.  This completes the trivial case.

Now, let $\omega$ be a primitive cube root of unity in $\gf_q$ and 
suppose that $\beta_k$ and $\gamma_k$ both have degree $3$ over 
$\gf_{q^{3^{k-1}}}$ for $1\le k\le n$.  Then $g(\beta_{n-1},Y)$ is the
minimum polynomial of $\beta_n$ over $\gf_{q^{3^{n-1}}}$; and hence 
$\gamma_{n-1}$ is not a cube in $\gf_{q^{3^{n-1}}}$.  In particular,
\begin{equation*}
\gamma_{n-1}^{(q^{3^{n-1}}-1)/3}=\omega.
\end{equation*}
Observe that
$G(\gamma_{n-1},Y)$ is the minimum polynomial of $\gamma_n$ over 
$\gf_{q^{3^{n-1}}}$.  Thus,
\begin{eqnarray*}
\gamma_n^{(q^{3^n}-1)/3}
&=&\left(
\gamma_n^{\left(\left(
q^{3^{n-1}}\right)^2+q^{3^{n-1}}+1\right)}\right)^{(q^{3^{n-1}-1})/3}
=\left(\mathrm{N}_{n,1}(\gamma_n)\right)^{(q^{3^{n-1}}-1)/3}\\
&=&\left(-729\gamma_{n-1}\right)^{(q^{3^{n-1}}-1)/3}=\omega;
\end{eqnarray*}
i.e., $\beta_{n+1}$ has degree $3$ over $\gf_{q^{3^n}}$.
To prove that $\gamma_{n+1}$ also has degree $3$ over $\gf_{q^{3^n}}$, write 
$\gamma_{n+1}$ in terms of the $\gf_{q^{3^n}}$-basis 
$\{1,\beta_{n+1},\beta_{n+1}^2\}$, and proceed as we did for $\gamma_1$.
\end{proof}

An easy induction proof using the fact that $G(\gamma_{k-1},Y)$ is the 
minimum polynomial of $\gamma_k$ over $\gf_{q^{3^{k-1}}}$ for 
$1\le k\le n$, shows that
\begin{equation*}
\mathrm{N}_{n,j}(\gamma_n)=(-729)^{(3^j-1)}\gamma_{n-j}
\end{equation*}
for $1\le j\le n$.
\begin{prop}
The order of $\gamma_n$ in $\gf_{q^{3^n}}$ is greater than 
$3^{\frac{1}{2}n^2+\frac{3}{2}n+\ord_3(q-1)}$.
\end{prop}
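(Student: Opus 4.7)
The plan is to follow the blueprint of Proposition~\ref{2order} with $\ell = 3$ in place of $\ell = 2$: first establish the $3$-part of the order of $\gamma_n$, then exhibit $n$ additional pairwise coprime non-$3$ factors, and finally multiply. For the $3$-part, the proof of Proposition~\ref{3deg} already produces $\gamma_n^{(q^{3^n} - 1)/3} = \omega \neq 1$. Since $q \equiv 1 \pmod 3$, the recursion $q^{3^k} - 1 = (q^{3^{k - 1}} - 1)\bigl((q^{3^{k - 1}})^2 + q^{3^{k - 1}} + 1\bigr)$ combined with the elementary congruence $a \equiv 1 \pmod 3 \Rightarrow a^2 + a + 1 \equiv 3 \pmod 9$ gives by induction $\ord_3(q^{3^n} - 1) = n + \ord_3(q - 1)$, whence $\ord_3((q^{3^n} - 1)/3) = n - 1 + \ord_3(q - 1)$, and Fact~\ref{ellpwr} forces $3^{n + \ord_3(q - 1)}$ to divide the order of $\gamma_n$.

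For the other factors I would set $S_j := \bigl((q^{3^{n - j}})^2 + q^{3^{n - j}} + 1\bigr)/3$ for $j = 1, \dots, n$ and aim to prove $\gamma_n^{(q^{3^n} - 1)/S_j} \neq 1$; Fact~\ref{gpfact} would then force the order of $\gamma_n$ to share a factor with $S_j$. Using the norm identity $\mathrm{N}_{n, j - 1}(\gamma_n) = (-729)^{3^{j - 1} - 1}\gamma_{n - j + 1}$ already established, the telescoping $(q^{3^n} - 1)/\bigl((q^{3^{n - j}})^2 + q^{3^{n - j}} + 1\bigr) = (q^{3^{n - j}} - 1)\prod_{k = 1}^{j - 1}\bigl((q^{3^{n - k}})^2 + q^{3^{n - k}} + 1\bigr)$, and the observation that $-729 \in \gf_q$ is killed by the exponent $q^{3^{n - j}} - 1$, the problem reduces to showing $\gamma_{n - j + 1}^3 \notin \gf_{q^{3^{n - j}}}$. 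Expanding $\gamma_{n - j + 1}^3$ in the basis $\{1, \gamma_{n - j + 1}, \gamma_{n - j + 1}^2\}$ (valid by Proposition~\ref{3deg}) via $G(\gamma_{n - j}, \gamma_{n - j + 1}) = 0$, the coefficients of $\gamma_{n - j + 1}^2$ and $\gamma_{n - j + 1}$ come out to $27\gamma_{n - j}(10 + 36\gamma_{n - j} + 27\gamma_{n - j}^2)$ and $243\gamma_{n - j}(4 + 3\gamma_{n - j})$; both vanish only when $\gamma_{n - j} = 0$, or when $\gamma_{n - j} = -4/3$ and the characteristic divides $10$.

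The case $\gamma_{n - j} = 0$ is impossible: for $n - j = 0$ it would make $\gamma_0 = 0$ a cube, contradicting the choice of $\beta_0$, while for $n - j \geq 1$ it is forbidden by Proposition~\ref{3deg}. The second case, together with $p \neq 3$, forces $p \in \{2, 5\}$, and in characteristic $2$ we have $-4/3 = 0$, which reduces to the previous case. The main obstacle is characteristic $5$, where $-4/3 = 2 \in \gf_5 \subseteq \gf_q$ and hence would force $n - j = 0$ with $\gamma_0 = 2$; to rule this out I would argue that $q = 5^m$ with $q \equiv 1 \pmod 3$ requires $m$ even, so $5^m \equiv 1 \pmod 8$ gives $8 \mid q - 1$ and $4 \mid (q - 1)/3$, whence every $a \in \gf_5^*$ (which satisfies $a^4 = 1$) also satisfies $a^{(q - 1)/3} = 1$ and is a cube in $\gf_q$ by Fact~\ref{pwrdetect}, contradicting the non-cube status of $\gamma_0 = 2$. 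Once all bad cases are excluded, Lemmas~\ref{gcdcomp} and~\ref{primebound} with $\ell = 3$, $b = q$ make the $S_j$ pairwise coprime with every prime divisor of $S_j$ exceeding $3^{n - j + 1}$, so multiplying yields the order of $\gamma_n$ bounded below by $3^{n + \ord_3(q - 1)} \prod_{j = 1}^n 3^{n - j + 1} = 3^{(n^2 + 3n)/2 + \ord_3(q - 1)}$, as claimed.
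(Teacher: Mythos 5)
Your proposal is correct and follows essentially the same route as the paper: the $3$-part via Fact~\ref{ellpwr}, then reduction of $\gamma_n^{(q^{3^n}-1)/S_j}\neq 1$ to $\gamma_{n-j+1}^3\notin\gf_{q^{3^{n-j}}}$ via the norm identity, the same system of coefficient equations from $G$, and the same endgame with Lemmas~\ref{gcdcomp} and~\ref{primebound}. Your only departures are cosmetic (direct substitution of $\gamma_{n-j}=-4/3$ instead of the paper's quadratic-formula computation), and you in fact supply the justification that $2$ is a cube in $\gf_{5^m}$ for $m$ even, which the paper merely asserts.
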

\begin{proof}
We first compute the power of $3$ dividing the order of $\gamma_n$.  Recall 
from the proof of Proposition~\ref{3deg} that $\gamma_n^{(q^{3^n}-1)/3}\neq 1$.
However, $\gamma_n^{(q^{3^n}-1)}=1$ since $\gamma_n\in\gf_{q^{3^n}}$.
Since $q\equiv 1\pmod 3$, $\ord_3((q^{3^j})^2+q^{3^j}+1)
=1$ for each $j\ge 1$.
Repeatedly using the difference of cubes formula,
we have
\begin{eqnarray*}
\ord_3\left(\frac{q^{3^n}-1}{3}\right)
&=&\ord_3(q-1)-1
+\sum_{j=0}^{n-1}\ord_3\left(\left(q^{3^j}\right)^2+q^{3^j}+1\right)\\
&=&n-1+\ord_3(q-1).
\end{eqnarray*}
Thus, $3^{n+\ord_3(q-1)}$ divides the order of $\gamma$ by Fact~\ref{ellpwr}.

Now, we look for primes dividing the order that are not equal to $3$.  In 
particular, we will show that the order of $\gamma_n$ has a common factor
with $((q^{3^{n-j}})^2+q^{3^{n-j}}+1)/3$ for each $1\le j\le n$.
This factor must not be a multiple of $3$ since 
$\ord_3((q^{3^{n-j}})^2+q^{3^{n-j}}+1)=1$ as noted above.
By Lemma~\ref{gcdcomp}, with $\ell=3$ and $b=q$, we see that these factors 
must be pairwise coprime as well.  Hence, we get $n$ distinct prime factors 
dividing the order of $\gamma_n$, none of which are equal to $3$.
By Lemma~\ref{primebound}, each of these primes must be bounded below by
$3^{n-j+1}$.  Hence, if we can show that the order of $\gamma_n$ has a 
common factor
with $((q^{3^{n-j}})^2+q^{3^{n-j}}+1)/3$ for $1\le j\le n$, 
then we have that the order of 
$\gamma_n$ is bounded below by
\begin{eqnarray*}
3^{n+\ord_3(q-1)}\prod_{j=1}^n3^{n-j+1}
&=&3^{n+\ord_3(q-1)+n(n+1)/2}
=3^{\frac{n^2+3n}{2}+\ord_3(q-1)}.
\end{eqnarray*} 
By Fact~\ref{gpfact}, the proof will be complete when we show that
the $\frac{q^{3^n}-1}{((q^{3^{n-j}})^2+q^{3^{n-j}}+1)/3}$ power of 
$\delta_n$ is not equal to $1$ for $1\le j\le n$.
Now, $\delta_n$ raised to the 
$\frac{q^{3^n}-1}{((q^{3^{n-j}})^2+q^{3^{n-j}}+1)/3}$ power is equal to
\begin{eqnarray*}
(\mathrm{N}_{n,j-1}(\gamma_n))^{3(q^{3^{n-j}-1})}
=((-729)^{(3^{(j-1)}-1)}\gamma_{n-j+1})^{3(q^{3^{n-j}}-1)}
\neq 1
\end{eqnarray*}
provided $\gamma_{n-j+1}^3\not\in\gf_{q^{3^{n-j}}}$.
From (\ref{gammarel}), we know that we may write $\gamma_{n-j+1}^3$ as
\begin{equation*}
\gamma_{n-j+1}^3
=(270\gamma_{n-j}+972\gamma_{n-j}^2+729\gamma_{n-j}^3)\gamma_{n-j+1}^2
+(972\gamma_{n-j}+729\gamma_{n-j}^2)\gamma_{n-j+1}
+729\gamma_{n-j}.
\end{equation*}
Thus, $\gamma_{n-j+1}^3\in\gf_{q^{3^{n-j}}}$ if only if $\gamma_{n-j}$ 
satisfies the system
\begin{eqnarray*}
270\gamma_{n-j}+972\gamma_{n-j}^2+729\gamma_{n-j}^3&=&0,\\
972\gamma_{n-j}+729\gamma_{n-j}^2&=&0.
\end{eqnarray*}
Suppose that $\gamma_{n-j}$ does satisfy the above system.
If the characteristic is $2$, the first equation implies that $\gamma_{n-j}=0$, 
which is a contradiction.  Suppose then that the characteristic is not $2$.  
Solving the system, we have
$-3^{-2}(6+\sqrt{6})=\gamma_{n-j}=-3^{-1}4$, where $\sqrt 6$ may be any
square root of $6$.  
This leads to the conclusion 
that $30=0$.  Hence, the characteristic must be $5$.  
By Proposition~\ref{3deg}, we see that $j=n$ since 
$\gamma_{n-j}=-3^{-1}4\in\gf_q$.
However, this means that
$\gamma_{0}=2$, which is in contradiction with the 
choice of $\beta_0$ since $2$ is a perfect cube in this case.  
\end{proof}

\section{Comparison with Voloch's Work}\label{voloch_comparison}

The following is an improvement of a result of 
Voloch~\cite[\textsection 5]{voloch:2007}.
The proof is similar to the proof of the main theorem in~\cite{voloch:2007},
but more elementary in the sense that we avoid working with algebraic function 
fields.

\begin{thm}\label{voloch}
Let $q$ be a prime power, and let $0<\epsilon,\eta<1$. 
For $d$ sufficiently large, 
if $a\in\overline{\gf}_q$ has order $r$ and degree $d$ over $\gf_q$ with 
$r < d^{2-2\epsilon}$,  
then $a-1$ has order at least
$ \exp((1-\eta) \frac{2\epsilon}{3}d^{\epsilon/3} \log d )$.
The degree $d$ need only be large enough for the inequalities 
of~\eqref{1st d large requirement} and~\eqref{2nd d large requirement} to hold,
which depends only on the choices of $\epsilon$ and $\eta$.
\end{thm}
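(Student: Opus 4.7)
The plan is to argue by contradiction: assume $s := \ord(a - 1)$ is less than the claimed bound, and reach a contradiction by exhibiting too many distinct elements in the multiplicative subgroup $G := \langle a, a-1 \rangle \subset \gf_{q^d}^*$, whose order divides $\mathrm{lcm}(r, s)$ and hence is at most $rs$. The principal tool is an evaluation principle: for every $(I, J) \in \nn^2$ the element $a^I(a - 1)^J$ lies in $G$; any collision $a^I(a-1)^J = a^{I'}(a-1)^{J'}$ with $(I, J) \ne (I', J')$ produces a nonzero polynomial in $\gf_q[X]$ of degree at most $\max(I + J,\, I' + J')$ vanishing at $a$. Since $[\gf_q(a):\gf_q] = d$, that degree must be at least $d$. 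The naive application of this principle to the box $\{(I, J) : I + J < d/2\}$ yields only $|G| \ge d^2/8$ and hence $s \ge d^{2\epsilon}/8$, which falls far short of the target exponential.

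To amplify this, I would consider a much richer family of polynomials whose values at $a$ lie in $G$. Following Voloch (but in an elementary polynomial-counting framework), I would form products $P_S(X) = \prod_{\nu \in S} \psi_\nu(X)$, where the $\psi_\nu \in \gf_q[X]$ are drawn from a carefully chosen pool of size $L$, each satisfying $\psi_\nu(a) \in G$ and each of small individual degree. If the $\psi_\nu$ are arranged so that distinct subsets $S$ yield distinct polynomials $P_S$ (for instance by taking them to be pairwise coprime irreducibles, or by controlling the Frobenius/exponent structure so that subset sums cannot collide), then whenever $\deg P_S < d$ the map $S \mapsto P_S(a)$ is injective by the evaluation principle. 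The number of subsets $S$ of size $K$ from a pool of size $L$ is $\binom{L}{K}$, which by Stirling is exponential in $K\log(L/K)$.

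Optimizing $K$ and $L$ under the degree constraint $\sum_{\nu \in S}\deg\psi_\nu < d$ and combining with $rs \ge |G| \ge \binom{L}{K}$ leads to the choice $K \asymp d^{\epsilon/3}$, at which $K \log(L/K) \asymp d^{\epsilon/3}\log d$; the coefficient $\tfrac{2\epsilon}{3}$ emerges after substituting the hypothesis $r < d^{2-2\epsilon}$ and solving for $s$. The two inequalities \eqref{1st d large requirement} and \eqref{2nd d large requirement} in the statement are precisely the explicit quantitative conditions on $d$ ensuring that Stirling's estimate and other lower-order error terms absorb into the $(1-\eta)$-slack factor. These conditions depend only on $\epsilon$ and $\eta$, as asserted.

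The main obstacle is the construction of the $\psi_\nu$. The trivial choices $\psi_\nu = X,\ X-1$ (or their monomial products) collapse multiplicatively: any product reduces to a single $a^I(a-1)^J$ with $(I, J)$ determined by the exponent sums, recovering only the weak quadratic bound. To produce genuinely exponential variety one needs a family of $L$ polynomials whose \emph{values} at $a$ lie in $G$ yet whose polynomial factorizations are multiplicatively independent so that distinct subsets give distinct polynomials of degree $< d$. Voloch's original proof handles this step through the function-field geometry of the curve $y = x - 1$; the refinement in the present paper consists of reformulating that geometric input in purely polynomial-theoretic language and carefully tracking the constants to produce the quantitative, explicit bound stated in the theorem.
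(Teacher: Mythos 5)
Your proposal correctly reverse-engineers the combinatorial skeleton of the argument---produce $\binom{L}{K}$ distinct elements as products over $K$-element subsets of a pool of size $L$, rule out collisions by comparing polynomial degrees against $d$, and optimize with $K\asymp d^{\epsilon/3}$---but the step you yourself flag as ``the main obstacle,'' the construction of the pool $\{\psi_\nu\}$, is not a detail to be filled in: it is the entire content of the proof, and without it the argument never gets off the ground. The paper's construction is this. Set $N=\lceil d^{1-\epsilon}\rceil$. Since $a$ has degree $d$, the multiplicative order of $q$ modulo $r$ is exactly $d$, so $\langle q\rangle$ has index $\phi(r)/d$ in $(\zz/r\zz)^*$; by an averaging argument some coset $\Gamma=\gamma\langle q\rangle$ contains the residues of roughly $Nd/r\ge d^{\epsilon}$ integers $n\le N$. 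This pigeonhole step is precisely where the hypothesis $r<d^{2-2\epsilon}$ enters---it makes the pool large---and not, as your sketch suggests, in ``solving for $s$'' at the end. Writing $c=a^{\alpha}$ with $\alpha\gamma\equiv 1\pmod r$, each such $n$ satisfies $n\equiv\gamma q^{j_n}\pmod r$, whence $c^{n}-1=a^{q^{j_n}}-1=(a-1)^{q^{j_n}}$ is a Frobenius conjugate, hence an explicit power, of $b=a-1$. The pool is therefore $\{t^{n}-1:n\in J_\Gamma\}$ evaluated at $c$, and the product over any subset $I$ is the single power $b^{\sum_{n\in I}q^{j_n}}$ of $b$ itself; no auxiliary group $G=\langle a,a-1\rangle$ and no division by $r$ are needed. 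Distinctness of the products over distinct $T$-subsets, $T=[d^{\epsilon/3}]$, follows from your evaluation principle applied to $c$ (which also has degree $d$, since $\gcd(\alpha,r)=1$) together with the degree bound $NT<d$, which is exactly inequality~\eqref{1st d large requirement}.

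Two further remarks. First, your fallback framework $rs\ge|G|\ge\binom{L}{K}$ would still give the stated bound once a valid pool were in hand, since $r<d^{2}$ is negligible against the exponential count; that part of the design is harmless but unnecessary. Second, your proposed realizations of the pool do not work as described: ``pairwise coprime irreducibles with values in $\langle a,a-1\rangle$'' has no evident construction, and in the paper the $\psi_\nu=t^{n}-1$ are neither irreducible nor pairwise coprime---injectivity of $S\mapsto P_S(c)$ comes not from multiplicative independence of the $\psi_\nu$ but from the fact that $\prod_{n\in I}(t^{n}-1)-\prod_{n\in I'}(t^{n}-1)$ is a nonzero polynomial of degree less than $d$ vanishing at $c$. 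As written, the proposal describes the outer layer of the proof with the inner mechanism left as an acknowledged black box, so it cannot be accepted as a proof.
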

\begin{proof}
Let $0<\epsilon<1$ be given, and put 
$N := \ceil{d^{1-\epsilon}}$.
Note that $(r,q)=1$ since $r$ divides one less than a power of $q$ and $q$ is 
a prime power.  Also, note that the elements $a^{q^i}$, $0\le i \le d-1$, are distinct. 
It follows that the multiplicative order of $q$ modulo $r$ is exactly $d$.
For each coset $\Gamma$ of $\langle q\rangle$ in $(\zz/r\zz)^*$,
we define 
$J_\Gamma := \{ n\le N : n\mod r \in \Gamma\}.$
Note that there are $[(\zz/r\zz)^*:\langle q\rangle ] = \phi(r)/d$ 
cosets of $\langle q\rangle $ in $\left(\zz/r\zz\right)^*$.
Now
$$\sum_{\Gamma} |J_\Gamma| = \#\{ 1\le n \le N: \gcd(n,r) =1 \} 
=\frac{N \phi(r)}{r}+O(r^{\epsilon/10}),$$
where the sum is over all cosets of $\Gamma$ in $(\zz/r\zz)^*$.
Thus, there exists a coset $\Gamma = \gamma\langle q\rangle$ such that 
$|J_\Gamma|$ is at least the average.  That is,
$|J_\Gamma|\ge\frac{N d}{r}+O(dr^{\epsilon/10}/\phi(r))$.
Thus, there exists a positive constant $c_\epsilon$ so that
$|J_\Gamma|\ge\frac{Nd}{r}-c_\epsilon\frac{dr^{\epsilon/10}}{\phi(r)}
\ge d^\epsilon-c_\epsilon d^{\frac{\epsilon-\epsilon^2}{5}}$ since 
$d\le\phi(r)$.

Since $\gamma$ is coprime to $r$, write $\alpha\gamma+\beta r=1$ and 
take $c=a^\alpha$.  Then $a=c^\gamma$, and $c$ has order $r$ and degree 
at least $d$.
Let $b:=a-1$.
For each $n \in J_\Gamma$, there exists $j_n$ such that $n\equiv \gamma q^{j_n} \pmod r$.
Whence $c^n = c^{\gamma q^{j_n}} = a ^{q^{j_n}}$, and so 
$b^{q^{j_n}} = a^{q^{j_n}} - 1 = c^n-1$. 

Now, for every $I \subset J_\Gamma$ we write $b_I := \prod_{n\in I} (c^n-1) = 
\prod_{n_j \in I } b^{q^{n_j}}$ which is  a power of $b$.
Put  $T = \left[d^{\epsilon/3}\right]$, and observe that for $d$ sufficiently
large
\begin{equation}\label{1st d large requirement}
NT =\ceil{d^{1-\epsilon}}[d^{\epsilon/3}] < d.
\end{equation}
We claim that for all distinct $I,I' \subset J_{\Gamma}$ with $|I| = |I'| = T$
we have that $b_I \ne b_{I'}$.  Suppose that $b_I = b_{I'}$, and consider the non-zero
polynomial
$$p(t) = \prod_{n \in I} (t^n-1) -\prod_{n \in I'} (t^n-1).$$
Observe that $p(c) = b_I - b_{I'} = 0$, and so $\deg p(t) \ge \deg_{\gf_q} c \ge d$.
On the other hand, we have that $\deg p(t) \le NT < d$, a contradiction.
Thus $b_I \ne b_{I'}$ as claimed.

It follows that there are at least $\binom{|J_\Gamma|}{T}$ distinct powers of 
$b$.  Choose $0<\eta<1$.
Then, for $d$ sufficiently large, 
\begin{align}\label{2nd d large requirement}
\binom{|J_\Gamma|}{T} 
&\ge \left( \frac{|J_\Gamma|}{d^{\epsilon/3}}-1\right)^{d^{\epsilon/3}}
\ge\left(d^{2\epsilon/3}-c_\epsilon d^{-\frac{\epsilon(2+3\epsilon)}{15}}
-1\right)^{d^{\epsilon/3}}
\ge \exp\left((1-\eta) \frac{2\epsilon}{3}d^{\epsilon/3} \log d\right),
\end{align}
as required.
\end{proof}

In order to compare this result to Theorem~\ref{2tower}, one may 
choose $a=a_n$ to be a primitive $2^n$-th root of unity in $\overline{\gf}_q$.  
The degree of $a$ over $\gf_q$ will be $2^{n-\ord_2(q-1)}$.  
Then, for $n$ sufficiently large, the conditions of 
the above theorem will be satisfied. 
Similarly, one may choose $a$ to be a primitive 
$3^n$-th root of unity in $\overline{\gf}_q$ to compare with Theorem~\ref{3tower}.

Because of the requirement that $a$ must have low 
order relative to its degree, there are many fields in which 
Theorem~\ref{voloch} will not apply. 
Furthermore, one may check that even though the bound of Theorem~\ref{voloch} 
will eventually
dominate the bounds of Theorems~\ref{2tower} and~\ref{3tower}, 
there will always be a range (in terms of $n$) in which the
bounds of Theorems~\ref{2tower} and~\ref{3tower} will be larger.
For example, suppose we apply Theorem~\ref{voloch} to the case mentioned above,
and we maximize the bound of Theorem~\ref{voloch} by setting 
$\epsilon=1$ and $\eta=0$.  Further, suppose we minimize the bound of 
Theorem~\ref{2tower} 
by say assuming that $\ord_2(q-1)=1$.  Note that this will also serve to 
maximize the bound of Theorem~\ref{voloch}.  Under these assumptions, we 
may check that the bound of Theorem~\ref{2tower} will dominate for 
$n\le 11$.  However, we note that Theorem~\ref{voloch} does not actually apply 
if we choose $\epsilon=1$ and $\eta=0$; and the range of $n$ for which 
Theorem~\ref{2tower} will dominate will be larger for any appropriate
choice of $\epsilon$ and $\eta$.

\section{Examples of Theorems}\label{eg}

In this section we provide the data from the first several iterations for five examples 
of the main theorems: three for Theorem~\ref{2tower} and two for Theorem~\ref{3tower}.  
The tables in this section provide information 
about the orders of $\alpha_n$, $\beta_n$, $\delta_n$, and $\gamma_n$ 
in relation to our bound.  We have chosen to take logs of these numbers because 
of their size.  For each example, we note that the actual orders are much 
higher than our lower bounds.  Computations were aided by MAGMA~\cite{magma}. 

For our first example of Theorem~\ref{2tower}, we choose $q=5$ and $\alpha_0=2$.
\begin{table}[H]\caption{$q=5$; $\alpha_0=2$.}
\begin{tabular}{|c|c|c|c|c|}
\hline
 & & & & \\
$n$ & $\log_2\left|\gf_{5^{2^n}}^*\right|$ & $\log_2|\langle\alpha_n\rangle|$ & $\log_2|\langle\delta_n\rangle|$ & $\log_2\left(2^{\frac{1}{2}n^2+\frac{3}{2}n+1}\right)$\\
 & & & & \\
\hline
1 & 4.59 & 4.59 & 3.00 & 3.00\\ 
2 & 9.28 & 9.28 & 7.70 & 6.00\\ 
3 & 18.6 & 16.0 & 17.0 & 10.0\\ 
4 & 37.1 & 35.6 & 31.5 & 15.0\\ 
5 & 74.2 & 69.8 & 68.6 & 21.0\\ 
6 & 148. & 148. & 143. & 28.0\\ 
7 & 297. & 295. & 292. & 36.0\\ 
8 & 594. & 590. & 589. & 45.0\\ 
\hline
\end{tabular}
\end{table}

For our second example of Theorem~\ref{2tower}, we choose $q=9$ and
$\alpha_0=\zeta+2$, where $\zeta$ is a root of $x^2+1$.  Note that, in this 
example, $\delta_n$ is actually primitive for each of the first eight 
iterations.
\begin{table}[H]\caption{$q=9$; $\alpha_0=\zeta+2$.}
\begin{tabular}{|c|c|c|c|c|}
\hline
& & & & \\
$n$ & $\log_2\left|\gf_{9^{2^n}}^*\right|$ & $\log_2|\langle\alpha_n\rangle|$ &
$\log_2|\langle\delta_n\rangle|$ &
$\log_2\left(2^{\frac{1}{2}n^2+\frac{3}{2}n+3}\right)$\\ 
& & & & \\
\hline 
1 & 6.32 & 5.32 & 6.32 & 5.00\\ 
2 & 12.7 & 10.7 & 12.7 & 8.00\\ 
3 & 25.4 & 22.4 & 25.4 & 12.0\\ 
4 & 50.8 & 46.8 & 50.8 & 17.0\\ 
5 & 102. & 96.5 & 102. & 23.0\\ 
6 & 203. & 197. & 203. & 30.0\\ 
7 & 406. & 399. & 406. & 38.0\\ 
8 & 812. & 804. & 812. & 47.0\\ 
\hline
\end{tabular}
\end{table}

For our final example of Theorem~\ref{2tower}, we choose $q=121$ and
$\alpha_0=\eta^8$, where $\eta$ is a root of $x^2+7x+2$.  Here, $\delta_n$ 
is primitive except for $n=3$ and $n=7$.
\begin{table}[H]\caption{$q=121$; $\alpha_0=\eta^8$.}
\begin{tabular}{|c|c|c|c|c|}
\hline
& & & & \\
$n$ & $\log_2\left|\gf_{121^{2^n}}^*\right|$ & $\log_2|\langle\alpha_n\rangle|$ &
$\log_2|\langle\delta_n\rangle|$ &
$\log_2\left(2^{\frac{1}{2}n^2+\frac{3}{2}n+3}\right)$\\ 
& & & & \\
\hline 
1 & 13.8 & 11.8 & 13.8 & 5.00\\ 
2 & 27.7 & 26.7 & 27.7 & 8.00\\ 
3 & 55.4 & 50.8 & 53.0 & 12.0\\ 
4 & 111. & 109. & 111. & 17.0\\ 
5 & 222. & 216. & 222. & 23.0\\ 
6 & 443. & 440. & 443. & 30.0\\ 
7 & 886. & 874. & 883. & 38.0\\ 
\hline
\end{tabular}
\end{table}

For our first example of Theorem~\ref{3tower}, we choose $q=7$ and $\beta_0=3$.
In this example, $\gamma_n$ appears to alternate between being primitive and 
not.
\begin{table}[H]\caption{$q=7$; $\beta_0=3$.}
\begin{tabular}{|c|c|c|c|c|}
\hline
& & & & \\
$n$ & $\log_2\left|\gf_{7^{3^n}}^*\right|$ & $\log_2|\langle\beta_n\rangle|$ &
$\log_2|\langle\gamma_n\rangle|$ & 
$\log_2\left(3^{\frac{1}{2}n^2+\frac{3}{2}n+1}\right)$\\ 
& & & & \\
\hline
1 & 8.42 & 7.41 & 5.84 & 4.76\\ 
2 & 25.3 & 25.3 & 25.3 & 9.52\\ 
3 & 75.8 & 75.8 & 74.2 & 15.8\\ 
4 & 228. & 228. & 228. & 23.8\\ 
5 & 682. & 681. & 681. & 33.3\\ 
\hline
\end{tabular}
\end{table}

For our second example of Theorem~\ref{3tower}, we choose $q=16$ and 
$\beta_0=\xi$, where $\xi$ is a root of $x^4+x+1$.  Note that here $\gamma_n$ 
is primitive for each of the first five iterations.
\begin{table}[H]\caption{$q=16$; $\beta_0=\xi$.}
\begin{tabular}{|c|c|c|c|c|}
\hline
& & & & \\
$n$ & $\log_2\left|\gf_{16^{3^n}}^*\right|$ & $\log_2|\langle\beta_n\rangle|$ &
$\log_2|\langle\gamma_n\rangle|$ & 
$\log_2\left(3^{\frac{1}{2}n^2+\frac{3}{2}n+1}\right)$\\ 
& & & & \\
\hline 
1 & 12.0 & 8.83 & 12.0 & 4.76\\
2 & 36.0 & 31.2 & 36.0 & 9.52\\
3 & 108. & 102. & 108. & 15.8\\
4 & 324. & 316. & 324. & 23.8\\
5 & 972. & 962. & 972. & 33.3\\
\hline
\end{tabular}
\end{table}

\bibliography{references}
\end{document}